\newtheorem{theorem}{Theorem}[section]
\newtheorem{definition}[theorem]{Definition}
\newtheorem{Lemma}[theorem]{Lemma}
\newtheorem{corollary}[theorem]{Corollary}
\newtheorem{remark}[theorem]{Remark}
\newtheorem{example}[theorem]{Example}
\newtheorem{conjecture}{Conjecture}[section]
\def\F{{\mathbb{F}}}
\newenvironment{proof}{\textbf{Proof:}}{\hspace*{\fill}
\nolinebreak\hspace*{\fill}$\Box$\newline\vspace{1mm}}
\newcommand{\sgn}{\rm sgn}
\begin{document}
	
\title{Superregular matrices over small finite fields}

\author{Paulo Almeida and Diego Napp }

\address{ Paulo Almeida
	Dept.\ of Mathematics, University of Aveiro, Portugal, palmeida@ua.pt \\
	Diego Napp,  Dept.\ of Mathematics, University of  Alicante, Spain, diego.napp@ua.es \\
}


\fntext[Pi]{This work was supported by Portuguese funds through the CIDMA - Center for Research and Development in Mathematics and Applications, and the Portuguese Foundation for Science and Technology (FCT-Funda\c{c}\~ao para a Ci\^encia e a Tecnologia), within project PEst-UID/MAT/04106/2019.
}


\begin{abstract}
A trivially zero minor of a matrix is a minor having all its terms in the Leibniz formula equal to zero. A matrix is superregular if all of its minors that are not trivially zero are nonzero. In the area of Coding Theory, superregular matrices over finite fields are connected with codes with optimum distance proprieties. When a superregular matrix has all its entries nonzero, it is called full superregular and these matrices are used to construct Maximum Distance Separable block codes. In the context of convolutional codes, lower triangular Toeplitz superregular matrices are employed to build convolutional codes with optimal column distance. Although full superregular matrices over small fields are known (e.g. Cauchy matrices), the few known general constructions of these matrices having a lower triangular Toeplitz structure require very large field sizes. In this work we investigate lower triangular Toeplitz superregular matrices over small finite prime fields. Following the work of Hutchinson, Smarandache and Trumpf, we study the minimum number of different nontrivial minors that such a matrix have, and exhibit concrete constructions of superregular matrices of this kind.
\end{abstract}

\begin{keyword}
superregular matrix \sep finite prime field \sep  Toeplitz  matrix

2010MSC: 15B33, 15B05, 94B10
\end{keyword}

\maketitle

\section{Introduction and preliminaries}
Let $\F$ denote a finite field, $F= ( \mu_{i,j} )_{1 \leq i,j \leq m} \in \F^{m \times m} $, and let $\mathcal{S}_m$ the symmetric group of order $m$. Recall that the determinant of $F$ is given by
\begin{equation}\label{deter}
|F|=\sum_{\sigma\in\mathcal{S}_m}{\sgn(\sigma)}\mu_{1\sigma(1)}\cdots \mu_{m\sigma(m)},
\end{equation}
where the sign of the permutation $\sigma$, denoted by $\sgn(\sigma)$, is $1$ (resp. $-1$) if $\sigma$ can be written as product of an even (resp. odd) number of transpositions. A {\em trivial term} of the determinant is a term of (\ref{deter}), $\mu_{1\sigma(1)}\cdots \mu_{m\sigma(m)}$, equal to zero. If $F$ is a square submatrix of a matrix $B$, with entries in $\mathbb{F}_{q^M}$, and all the terms of the determinant of $F$ are trivial we say that $|F|$ is a \textit{trivial minor} of $B$. We say that $B$ is \textit{superregular} if all its non-trivial minors are different from zero.

Several notions of superregular matrices have appeared in different areas of mathematics and engineering having in common the specification of some properties regarding their minors \cite{Ando1987,CIM1998,Gan59a,Pinkus2009,Roth1985}. In the context of coding theory these matrices have entries in a finite field $\F$ and are important because they can be used to generate linear codes with good distance properties. A class of these matrices, which we will call {\em full superregular}, were first introduced in the context of block codes. A full superregular matrix is a matrix with all of its minors different from zero and therefore all of its entries nonzero. It is easy to see that a matrix is full superregular if and only if any $\F$-linear combination of $N$ columns (or rows) has at most $N-1$ zero entries. For instance, Cauchy matrices are full superregular and can be used to build the so-called Reed-Solomon block codes. Also, circulant Cauchy matrices can be used to construct MDS codes, see \cite{cl12}. 
It is well-known that a systematic generator matrix $G = [I\ | \ B]$ generates a maximum distance separable (MDS) block code if and only if $B$ is full superregular, \cite{Roth1989}. The $q$-analog of Cauchy superregular matrices has been recently studied in depth in \cite{NERI2020}. \\[1ex]

Convolutional codes are more involved than block codes and, for this reason, a more general class of superregular matrices had to be introduced.

\begin{definition}{\cite[Definition 3.3]{gl03}}
A lower triangular matrix $B$ is  defined to be superregular if all of its minors, with the property that all the entries in their diagonal come from the lower triangular part of $B$, are nonsingular.
\end{definition}

In this paper, we call such matrices {\em LT-superregular}. Note that due to such a lower triangular configuration the remaining minors are necessarily zero. Roughly speaking, superregularity asks for all minors that are possibly nonzero, to be nonzero. In \cite{gl03} it was shown that Toeplitz LT-superregular matrices can be used to construct convolutional codes of rate $k/n$ and degree $\delta$ that are strongly MDS provided that $(n-k)\mid \delta$ (for the rank analog of Toeplitz LT-superregular matrices in the context of the rank metric, see \cite{ALMEIDA2020}). This is again due to the fact that the combination of columns of superregular matrices ensures the largest number of possible nonzero entries for any $\F$-linear combination (for this particular lower triangular structure). In other words, it can be deduced from \cite{gl03} that a lower triangular matrix $B=[b_0 ~ b_1 \dots b_{n-1}]\in \F^{n \times n}$, $b_i$ the columns of $B$, is LT-superregular  if and only if for any $\F$-linear combination $b$ of columns $ b_{i_1}, b_{i_2}, \dots , b_{i_N} $ of $B$, with $i_j < i_{j+1}$, then $wt(b) \geq wt(b_{i_1})-   N +1 = (n-i_1)- N +1$, where $wt(v)$ is the Hamming weight of a vector $v$, i.e., its number or nonzero coordinates. For a similar result but for more general classes of superregular matrices, not necessarily lower triangular, see \cite[Theorem 3.1]{al16}.  \\[1ex]

It is important to note that in this case due to this triangular configuration it is hard to come up with an algebraic construction of LT-superregular matrices.  There exist however two general constructions of these matrices \cite{ANP2013,gl03} although they need very large field sizes. 
In this paper we will be interested in finding Toeplitz LT-superregular matrices over small finite prime fields. So, our matrices will be of the form

\begin{equation}\label{ALT}
A_\gamma=\left [\begin{array}{cccc}
a_1 & 0 & \cdots & 0\\
a_2 & a_1 & \ddots & 0\\
\vdots & \ddots & \ddots & \vdots\\
a_\gamma & \cdots & a_2 & a_1
\end{array}\right ].
\end{equation}

One important question is how large a finite field must be in order that a superregular matrix of a given size can exist over that field. For example, there exists no LT-superregular matrix of order $3$ over the field $\F_2$ because all the entries in the lower triangular part of a superregular matrix must be nonzero, which means that in this case all such entries would have to be $1$; clearly, this does not result in
a superregular matrix, since the lower left submatrix of order $2$ is singular. The size of the smallest finite field for which exists an LT-superregular matrix of order $\gamma\leq 9$ can be seen in Table \ref{minfield}. For $\gamma\geq 10$ the smallest finite field for which exists a Toeplitz LT-superregular matrix of order $\gamma$ is still unknown, but in \cite{Hutchinson2008}, Hutchinson et al. obtained an upper bound for its size and in \cite{HaOs2018} the authors showed the existence of LT-superregular matrices of size $10 \times 10$ over the field $\F_{2^8}$. In \cite[Conjecture 3.5]{Hutchinson2008} and in \cite{gl03} it was conjectured, based on several examples, that an LT-superregular matrix of size $\gamma$ exits over $\F_{2^{\gamma-2}}$ for $\gamma\geq 5$. Recently, new upper bounds on the necessary field size for the existence of these matrices and other superregular matrices with different structure, were presented in \cite{Lieb2019}.

Since the work in \cite{Hutchinson2008} is the motivation for this paper, we will give a brief description of their method to derive an upper bound on the minimum size a finite field must have in order that a superregular matrix of a given size can exist over that field.

Consider
\[X_\gamma=\left [\begin{array}{cccc}
	x_1 & 0 & \cdots & 0\\
	x_2 & x_1 & \ddots & 0\\
	\vdots & \ddots & \ddots & \vdots\\
	x_\gamma & \cdots & x_2 & x_1
\end{array}\right ].\]
a lower triangular Toeplitz matrix with indeterminate entries $x_1, x_2, \dots, x_\gamma$. The determinants of the proper square submatrices of such a matrix are given by nonzero polynomials in these indeterminates. Notice that in any of these polynomials at most the first power of $x_\gamma$ can appear; i.e., each of these polynomials either it is linear in $x_\gamma$ or $x_\gamma$ does not appear in any of its terms. We study now those proper square submatrices of $X_\gamma$ whose determinants are linear in $x_\gamma$. Denote by $L_\gamma$ the set of such submatrices and by $L'_\gamma$, the subset of $L_\gamma$ formed by the submatrices of $X_\gamma$ which are symmetric over the antidiagonal. Hutchinson et al. proved that $N_\gamma:=\frac{1}{2}\left (\mid L_\gamma\mid+\mid L'_\gamma\mid\right )$ is an upper bound for the number of different polynomials that can appear as the determinants of elements of $L_\gamma$. By computer search we found that $N_\gamma$ is actually the exact number of such polynomials, for $\gamma\leq 7$ and $\gamma=9$, but for $\gamma=8$ we have $231$ different polynomials and for $\gamma=10$ we have $2489$ different polynomials, whereas $N_8=232$ and $N_{10}=2494$. In \cite{Hutchinson2008} it is also proved that
	
\begin{equation}\label{Ngamma}
N_\gamma=\frac{\frac{1}{\gamma}\left(\begin{array}{c}2\gamma-2\\
		\gamma-1\end{array} \right)+\left(\begin{array}{c}\gamma-1\\
		\left \lfloor \frac{\gamma-1}{2}\right \rfloor
		\end{array} \right)}{2}.
\end{equation}

Therefore, given $\gamma\geq 1$, a field $\F$ and a lower triangular Toeplitz matrix $A_\gamma\in \F^{\gamma\times \gamma}$ (as in (\ref{ALT})), then $A_\gamma$ has at most $N_\gamma$ different minors that depend on the entry $a_\gamma$, all of them being linear on $a_\gamma$.

\begin{remark}\label{construction}
Notice that $(N_i)_{i=2}^\infty$ is an increasing sequence (and $N_1=N_2=1$). If we choose a field $\F$, such that $|\,\F\,|>N_\gamma$, then we may choose $a_1\in \F$ such that $a_1\neq 0$, then select $a_2\in \F$ such that all the minors involving $a_2$ in the matrix
\[A_2=\left [\begin{array}{cc}
	a_1 & 0 \\
	a_2 & a_1
\end{array}\right ]\]
are nonzero (i.e. any $a_2\neq 0$), then again we can choose $a_3\in \F$ such that all the minors involving $a_3$ in the matrix
\[A_3=\left [\begin{array}{ccc}
a_1 & 0 & 0\\
a_2 & a_1 & 0\\
a_3 & a_2 & a_1
\end{array}\right ]\]
are nonzero, and continuing in this way, we may eventually choose $a_\gamma\in\F$ such that all of the minors involving $a_\gamma$ in the matrix $A_\gamma$ (as in (\ref{ALT})) are nonzero. Therefore, all the non trivial minors of this matrix $A_\gamma$ just constructed, are nonzero. Therefore $A_\gamma$ is LT-superregular. This is the idea of the proof of Theorem \ref{theoSm}.
\end{remark}

\begin{theorem}\cite{Hutchinson2008}\label{theoSm}
Let $\mathbb{F}$ be a finite field such that $\mid\mathbb{F}\mid > N_\gamma$, then there exists a $\gamma\times \gamma$ LT-superregular matrix over $\mathbb{F}$.
\end{theorem}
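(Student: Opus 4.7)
The plan is to proceed by induction on $k$ for $k=1,2,\dots,\gamma$, constructing at each step an LT-superregular Toeplitz matrix $A_k\in\F^{k\times k}$. This formalizes the sequential scheme sketched in Remark \ref{construction}. Since the sequence $(N_i)_{i\geq 2}$ is nondecreasing and $N_1=N_2=1$, the hypothesis $|\F|>N_\gamma$ yields $|\F|>N_k$ for every $1\leq k\leq\gamma$, so at every stage of the construction there will be enough elements to choose from.

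The base case $k=1$ is immediate: any $a_1\in\F\setminus\{0\}$ works, and such an element exists because $|\F|\geq 2$. For the inductive step, fix $a_1,\dots,a_{k-1}$ so that $A_{k-1}$ is LT-superregular and seek $a_k\in\F$ making $A_k$ LT-superregular. The crucial observation is that in $A_k$ the entry $a_k$ appears only at position $(k,1)$, so in every minor of $A_k$ the variable $a_k$ appears at most linearly. I would then split the nontrivial minors of $A_k$, indexed by a row set $R$ and a column set $C$, into two families according to whether they genuinely depend on $a_k$ or not.

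Minors not depending on $a_k$ fall into two subcases. If $k\notin R$ or $1\notin C$, a Toeplitz shift identifies the minor with a minor of $A_{k-1}$ (after discarding the cases trivially zero because of a zero row or column of $A_k$). Otherwise $k\in R,\ 1\in C$ but also $k\in C$, so the last column of the submatrix is $(0,\dots,0,a_1)^T$, and Laplace expansion along this column equates the minor to $a_1$ times a minor of $A_{k-1}$. In both subcases the inductive hypothesis, combined with $a_1\neq 0$, forces the minor to be nonzero.

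For minors that genuinely depend on $a_k$, i.e.\ those with $k\in R$, $1\in C$ and $k\notin C$, write $\det M = c\cdot a_k + d$ with $c=\pm\det(A_k[R\setminus\{k\},\,C\setminus\{1\}])$. Because $R\setminus\{k\}\subseteq\{1,\dots,k-1\}$ and $C\setminus\{1\}\subseteq\{2,\dots,k-1\}$, the coefficient $c$ is itself a minor of $A_{k-1}$; by the inductive hypothesis, either $c$ is trivially zero as a polynomial (but then $\det M$ does not in fact depend on $a_k$, contradicting this case) or $c\neq 0$ in $\F$. Thus each such minor forbids exactly one value of $a_k$, namely $-d/c$. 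The bound cited from Hutchinson et al.\ limits the number of distinct such determinant polynomials by $N_k$, so at most $N_k$ values of $a_k$ are excluded; since $|\F|>N_k$, a valid choice of $a_k$ exists, and after $\gamma$ iterations one obtains the desired LT-superregular matrix $A_\gamma$. The main obstacle, namely ruling out a vanishing of the coefficient $c$ after specialization for a minor that did depend on $a_k$ as a polynomial, is handled by precisely this identification of $c$ as itself a minor of $A_{k-1}$.
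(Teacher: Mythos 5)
Your overall strategy is the paper's own (the greedy construction sketched in Remark \ref{construction}, plus the cited bound that at most $N_k$ distinct determinant polynomials are linear in $x_k$), and most of the steps are sound, but your case analysis of the non-trivial minors of $A_k$ has a genuine gap: you implicitly identify ``the submatrix contains the entry $a_k$'' (i.e.\ $k\in R$, $1\in C$, $k\notin C$) with ``the determinant depends on $a_k$''. These are not the same. If the $(k,1)$-cofactor $\det A_k[R\setminus\{k\},C\setminus\{1\}]$ is a \emph{trivial} minor, then $\det M$ is independent of $a_k$, yet $M$ is covered by none of your cases: the Toeplitz shift requires $k\notin R$ or $1\notin C$, the Laplace expansion along the column $(0,\dots,0,a_1)^T$ requires $k\in C$, and your ``genuinely depends on $a_k$'' argument explicitly excludes it. Such minors exist and are non-trivial, and their nonvanishing is not automatic. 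For example, in $A_5$ take $R=\{2,3,5\}$ and $C=\{1,2,4\}$: the submatrix contains $a_5$, its determinant equals $a_2\left(a_2^2-a_1a_3\right)$, which does not involve $a_5$, and nothing you have established at step $k=5$ (neither the choice of $a_5$ nor your two ``independent of $a_k$'' subcases) proves it is nonzero.

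The gap is fixable inside your induction. Write $R=\{r_1<\dots<r_m=k\}$, $C=\{1=c_1<\dots<c_m\}$ with $c_m\le k-1$. If the $(k,1)$-cofactor is trivially zero, there is an index $t\le m-1$ with $r_t<c_{t+1}$; then every entry of $M$ in a row $r_s$ with $s\le t$ and a column $c_u$ with $u\ge t+1$ lies above the diagonal of $A_k$, so $M$ is block lower triangular and $\det M=\det M_{11}\cdot\det M_{22}$, where $M_{11}=A_k[\{r_1,\dots,r_t\},\{c_1,\dots,c_t\}]$ and $M_{22}=A_k[\{r_{t+1},\dots,r_m\},\{c_{t+1},\dots,c_m\}]$. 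Non-triviality of $M$ forces non-triviality of both blocks; $M_{11}$ is a submatrix of $A_{k-1}$, and since $c_{t+1}>r_t\ge 1$ all column indices of $M_{22}$ are at least $2$, so your Toeplitz shift turns $M_{22}$ into a submatrix of $A_{k-1}$ as well. Both factors are then nonzero by the inductive hypothesis, hence $\det M\neq 0$. (In the example above this gives $\det M=\left(a_2^2-a_1a_3\right)\cdot a_2$, a product of two minors of $A_4$.) With this additional case the induction is complete; the remaining ingredients — linearity in $a_k$, identification of the coefficient with a non-trivial minor of $A_{k-1}$, and the count of at most $N_k\le N_\gamma$ excluded values — are exactly those of the paper, which itself defers the detailed verification to Hutchinson et al.
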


Unfortunately this upper bound for the minimum field size is not very sharp, as Table \ref{minfield} (obtained in \cite{Hutchinson2008}) demonstrates. The actual minimum field sizes display in the table were obtained by randomised computer search.\\

\begin{table}[h]\begin{center}
	\begin{tabular}{|c|c|c|}\hline
		$\gamma$ & Minimum Field size & Upper Bound ($N_\gamma+1$)\\ \hline
		3 & 3 & 3\\
		4 & 5 & 5\\
		5 & 7 & 11\\
		6 & 11 & 27\\
		7 & 17 & 77\\
		8 & 31 & 233\\
		9 & 59 & 751\\
		10 & $\leq$ 127 & 2495\\ \hline
	\end{tabular}	
\caption{Comparison of actual required field sizes and $N_\gamma+1$}\label{minfield}
\end{center}
\end{table}

In this paper we continue the work initiated in [8] and study lower triangular Toeplitz  superregular matrices over $\F=\F_p$, $p$ an odd prime number. In particular, we investigate the number of different nonzero minors of these matrices. We show that this number is, in many cases, significantly smaller than the $N_\gamma$ derived in \cite{Hutchinson2008} and therefore this immediately improves the upper bound given in Theorem \ref{theoSm} for the minimum field size necessary for the existence of this class of superregular matrices.

\section{Smallest number of different nonzero minors of an LT-superregular Toeplitz matrix}

Since the multiplication by a constant does not change the superregularity of a matrix, we may assume that $a_1=1$. The following lemma implies that we can also assume that $a_2=1$.

\begin{Lemma}{\cite[Theorem 5.8]{to12}} 
Suppose that the matrix $A_\gamma$ in (\ref{ALT}) is LT-superregular and let $\alpha\in\F^*$. Then the matrix
\[\alpha\otimes A_\gamma=\left [\begin{array}{cccc}
	a_1 & 0 & \cdots & 0\\
	\alpha a_2 & a_1 & \ddots & 0\\
	\vdots & \ddots & \ddots & \vdots\\
	\alpha^{\gamma-1}a_\gamma & \cdots & \alpha a_2 & a_1
\end{array}\right ]\]
is also superregular.
\end{Lemma}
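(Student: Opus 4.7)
The plan is to realize the operation $A_\gamma\mapsto \alpha\otimes A_\gamma$ as conjugation by a diagonal matrix, and then use the fact that such a conjugation multiplies every minor by a nonzero scalar (a power of $\alpha$), while preserving the zero pattern of the entries.

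First I would introduce the diagonal matrix
\[
D=\mathrm{diag}(1,\alpha,\alpha^{2},\ldots,\alpha^{\gamma-1})\in\F^{\gamma\times\gamma},
\]
which is invertible since $\alpha\in\F^{*}$, and check the identity $\alpha\otimes A_\gamma=D\,A_\gamma\,D^{-1}$. Indeed, the $(i,j)$-entry of $A_\gamma$ equals $a_{i-j+1}$ for $i\ge j$ and $0$ otherwise, so the $(i,j)$-entry of $DA_\gamma D^{-1}$ equals $\alpha^{i-1}a_{i-j+1}\alpha^{-(j-1)}=\alpha^{i-j}a_{i-j+1}$ on and below the diagonal, matching the description of $\alpha\otimes A_\gamma$ in the lemma. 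In particular $A_\gamma$ and $\alpha\otimes A_\gamma$ have exactly the same zero pattern, so a minor of $\alpha\otimes A_\gamma$ is trivial (in the Leibniz-formula sense) if and only if the corresponding minor of $A_\gamma$ is trivial.

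Next I would compute the effect of diagonal conjugation on an arbitrary square submatrix. For index sets $I=\{i_{1}<\cdots<i_{k}\}$ and $J=\{j_{1}<\cdots<j_{k}\}$, the submatrix indexed by $(I,J)$ of $DA_\gamma D^{-1}$ equals
\[
\mathrm{diag}\bigl(\alpha^{i_{1}-1},\ldots,\alpha^{i_{k}-1}\bigr)\,A_\gamma[I,J]\,\mathrm{diag}\bigl(\alpha^{-(j_{1}-1)},\ldots,\alpha^{-(j_{k}-1)}\bigr),
\]
so, taking determinants,
\[
\det\bigl((\alpha\otimes A_\gamma)[I,J]\bigr)=\alpha^{\sum_{r}(i_{r}-j_{r})}\,\det\bigl(A_\gamma[I,J]\bigr).
\]
Since $\alpha\ne 0$, the scalar $\alpha^{\sum_{r}(i_{r}-j_{r})}$ is nonzero, hence $\det((\alpha\otimes A_\gamma)[I,J])\ne 0$ if and only if $\det(A_\gamma[I,J])\ne 0$.

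Combining the two observations, every nontrivial minor of $\alpha\otimes A_\gamma$ corresponds to a nontrivial minor of $A_\gamma$ (by the preservation of the zero pattern) and is a nonzero scalar multiple of it; by the LT-superregularity of $A_\gamma$ the latter is nonzero, hence so is the former, which gives the LT-superregularity of $\alpha\otimes A_\gamma$. There is really no hard step here: once one spots the diagonal similarity $\alpha\otimes A_\gamma=DA_\gamma D^{-1}$, the rest is routine linear algebra. The only minor point to be careful about is to justify that triviality of minors is preserved, which follows because $D$ and $D^{-1}$ are nonsingular diagonal matrices and therefore the entries of $A_\gamma$ and $\alpha\otimes A_\gamma$ vanish in exactly the same positions.
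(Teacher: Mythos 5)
Your proof is correct. The paper itself gives no argument for this lemma (it is quoted from the cited reference), and your diagonal-conjugation argument, writing $\alpha\otimes A_\gamma = D A_\gamma D^{-1}$ with $D=\mathrm{diag}(1,\alpha,\dots,\alpha^{\gamma-1})$ so that each minor indexed by $(I,J)$ is multiplied by the nonzero scalar $\alpha^{\sum_r (i_r-j_r)}$ while the zero pattern (hence triviality of minors) is unchanged, is exactly the standard way this scaling result is established; nothing is missing.
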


From now on, we will consider $\F=\F_p$ where $p$ is an odd prime number and
\begin{equation}\label{ALTsimples}
A_\gamma=A_\gamma(1, 1, a_3, \dots, a_\gamma)= \left [\begin{array}{ccccc}
1 & 0 & \cdots & \cdots & 0\\
1 & 1 & \ddots &  & \vdots\\
a_3 & 1 & \ddots  & \ddots & \vdots\\
\vdots & \ddots & \ddots & \ddots  & 0\\
a_\gamma & \cdots  & a_3 & 1 & 1
\end{array}\right ].
\end{equation}

In this section, we are interested in studying the smallest possible number of different minors for each $\gamma$ with $3\leq\gamma\leq 9$. For some of the values of $\gamma$ we are able to compute the smallest number of different nonzero minors for every finite prime field. We will also exhibit plenty of superregular matrices for each $3\leq \gamma\leq 9$.

\subsection{$\gamma=3$}
If $\gamma=3$ then there are two minors with the entry $a_3$, namely $|\,[a_3]\,|$ and
$\left |\begin{array}{cc}
1 & 1 \\
a_3 & 1
\end{array}\right |$,
so
\begin{equation}\label{a3not}
a_3\notin S_3=\{0,1\}.
\end{equation}
Hence $\F$ must have at least $3$ elements. Therefore $p\geq 3$. For example, if $a_3\equiv -1\mod{p}$, or $a_3\equiv \frac{1}{2}\mod{p}$, then $A_3(1,1,a_3)$ is LT-superregular.

\subsection{$\gamma=4$}
If $\gamma=4$ then $N_4=4$, i.e., there are four different minors with the entry $a_4$, on the variables $a_3$ and $a_4$, namely
\[|\,[a_4]\,|,\quad \left |\begin{array}{cc}
1 & 1 \\
a_4 & a_3
\end{array}\right |,\quad \left |\begin{array}{cc}
a_3 & 1 \\
a_4 & a_3
\end{array}\right |\quad\mbox{and}\quad \left |\begin{array}{ccc}
1 & 1 & 0\\
a_3 & 1 & 1\\
a_4 & a_3 & 1
\end{array}\right |.\]
So, we must have
\begin{equation}\label{a4not}
a_4\notin S_4=\{0,a_3,a_3^2,2a_3-1\}.
\end{equation}
Notice that if we put $a_3=\frac{1}{2}$ then there are only three different minors involving $a_4$. Hence, in this case we can always choose, for example, $a_4=1$. Therefore, we just proved the following result.

\begin{theorem}\label{a4theo}
For $p\geq 5$ and $a_3\equiv \frac{1}{2}\mod{p}$, the number of different minors of $A_4$ involving $a_4$ is $3$, and if $a_4\notin\{0,\frac{1}{2},\frac{1}{4}\}\mod{p}$ then $A_4(1,1,\frac{1}{2},a_4)$ is LT-superregular.
\end{theorem}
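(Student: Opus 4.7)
The plan is to specialize $a_3 \equiv 1/2 \pmod{p}$ in the set $S_4 = \{0,\, a_3,\, a_3^2,\, 2a_3 - 1\}$ from (\ref{a4not}) and exploit a coincidence. Substituting $a_3 = 1/2$ makes $2a_3 - 1 = 0$, which collides with the first element of $S_4$; hence as a set $S_4$ shrinks to $\{0,\, 1/2,\, 1/4\}$. Since each of the four minors of $A_4$ involving $a_4$ is linear in $a_4$ with leading coefficient $\pm 1$, it is determined up to sign by its unique root, which is precisely the corresponding element of $S_4$. Consequently a coincidence of roots forces the two offending minor polynomials to agree up to sign, and the number of distinct minor polynomials in $a_4$ drops from $4$ to $3$, establishing the first assertion.

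Next I would verify the technical prerequisites. First, $a_3 = 1/2$ must itself satisfy the $\gamma = 3$ condition (\ref{a3not}), i.e.\ $1/2 \notin \{0, 1\} \pmod{p}$, which holds for every odd prime. Second, the three remaining forbidden residues $0,\, 1/2,\, 1/4$ must be pairwise distinct modulo $p$, and the only nontrivial check, $1/4 \not\equiv 0$, is automatic. Finally, $\F_p$ must contain at least one element outside this forbidden set, which requires $p > 3$; this is precisely where the hypothesis $p \geq 5$ is used.

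For the LT-superregularity itself I would invoke the stepwise construction of Remark~\ref{construction}: with $a_1 = a_2 = 1$, $a_3 = 1/2$ legal by (\ref{a3not}), and $a_4$ avoiding the reduced forbidden set $\{0,\, 1/2,\, 1/4\}$, every nontrivial minor of $A_4(1, 1, 1/2, a_4)$ is nonzero, so the matrix is LT-superregular. The one delicate point is to check that the specialization $a_3 = 1/2$ does not turn a previously nontrivial minor into a trivial one; but since $1/2 \neq 0$ in $\F_p$, the lower triangular part of $A_4$ remains entry-wise nonzero and the combinatorial classification of trivial versus nontrivial minors is unaffected. In short, the principal ``obstacle''---really only a bookkeeping remark---is noticing the collision $2a_3 - 1 = 0$ at $a_3 = 1/2$; after that, the theorem reduces to direct verification against the list (\ref{a4not}).
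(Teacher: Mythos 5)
Your proposal is correct and follows essentially the same route as the paper: specialize $a_3=\frac{1}{2}$ in $S_4$, note the collision $2a_3-1=0$ so the forbidden set becomes $\{0,\frac{1}{2},\frac{1}{4}\}$, and conclude LT-superregularity via the stepwise construction of Remark~\ref{construction}. (One tiny sharpening: the two colliding determinants, $a_4$ and $1-2a_3+a_4$, become literally identical at $a_3=\frac{1}{2}$, not merely equal up to sign, which is what justifies counting exactly three different minors.)
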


\subsection{$\gamma=5$}
Although $N_5=10$ it is possible to construct LT-superregular matrices $A_5$ over $\F_p$, with $p=7$ as the number of different minors involving $a_5$ can be reduced to $6$ by selecting properly $a_3$ and $a_4$, and to $7$ for all $p\geq 11$. The ten different minors in the variables $a_3, a_4$ and $a_5$ are
\[|\,[a_5]\,|,\quad \left |\begin{array}{cc}
1 & 1 \\
a_5 & a_4
\end{array}\right |,\quad \left |\begin{array}{cc}
a_3 & 1 \\
a_5 & a_4
\end{array}\right |,\quad \left |\begin{array}{cc}
a_3 & 1 \\
a_5 & a_3
\end{array}\right |,\quad \left |\begin{array}{cc}
a_4 & a_3 \\
a_5 & a_4
\end{array}\right |,\]
\[\left |\begin{array}{ccc}
1 & 1 & 0\\
a_3 & 1 & 1\\
a_5 & a_4 & a_3
\end{array}\right |,\quad\left |\begin{array}{ccc}
1 & 1 & 0\\
a_4 & a_3 & 1\\
a_5 & a_4 & a_3
\end{array}\right |,\quad\left |\begin{array}{ccc}
1 & 1 & 0\\
a_4 & a_3 & 1\\
a_5 & a_4 & 1
\end{array}\right |,\quad\left |\begin{array}{ccc}
a_3 & 1 & 1\\
a_4 & a_3 & 1\\
a_5 & a_4 & a_3
\end{array}\right |\quad\mbox{and}\quad\left |\begin{array}{cccc}
1 & 1 & 0 & 0\\
a_3 & 1 & 1 & 0\\
a_4 & a_3 & 1 & 1\\
a_5 & a_4 & a_3 & 1
\end{array}\right |.\]
Now, if $p$ is sufficiently large such that $a_3$ satisfies (\ref{a3not}), $a_4$ satisfies (\ref{a4not}) and
\begin{equation}\label{a5not}\begin{split}
a_5\notin S_5=\left\{0, a_4, a_3a_4, a_3^2, \frac{a_4^2}{a_3},  a_3^2-a_3+a_4,\right . & -a_3^2+a_3a_4+a_4, 2a_4-a_3, \\
 & \left .\frac{a_3^3-2a_3a_4+a_4^2}{a_3-1}, a_3^2-3a_3+2a_4+1\right\},
\end{split}
\end{equation}
then $A_5$ is LT-superregular.

Notice that if $a_3\equiv \frac{1}{4}\mod{p}$ and $a_4\equiv -\frac{1}{8}\mod{p}$ then
\[S_5=\left \{0, -\frac{1}{8}, -\frac{1}{32}, \frac{1}{16}, \frac{1}{16}, -\frac{5}{16}, -\frac{7}{32}, -\frac{1}{2}, -\frac{1}{8}, \frac{1}{16}\right \},\]
i.e.
 \[a_3^2=\frac{a_4^2}{a_3}= a_3^2-3a_3+2a_4+1\equiv \frac{1}{16}\mod{p}\]
and
\[\frac{a_3^3-2a_3a_4+a_4^2}{a_3-1}=a_4\equiv -\frac{1}{8}\mod{p}.\]
In the case $p=7$ we even have $-a_3^2+a_3a_4+a_4\equiv 0\mod{7}$.
Also, if $a_3\equiv \frac{3}{4}\mod{p}$ and $a_4\equiv \frac{3}{8}$ then
\[S_5=\left \{0, \frac{3}{8}, \frac{9}{32}, \frac{9}{16}, \frac{3}{16}, \frac{3}{16}, \frac{3}{32}, 0, 0, \frac{1}{16}\right \},\]
i.e.
\[2a_4-a_3=\frac{a_3^3-2a_3a_4+a_4^2}{a_3-1}=0\]
and
\[\frac{a_4^2}{a_3}=a_3^2-a_3+a_4\equiv\frac{3}{16}\mod{p}.\]
In this case, we also have $a_3a_4\equiv a_3^2-3a_3+2a_4+1\equiv 4\mod{7}$. Moreover, it is easy to see that if $a_5\equiv \frac{1}{4}\mod{p}$ then $a_5\notin S_5$ in both cases.

If we follow the previous subsections and consider $a_3=\frac{1}{2}$ and $a_4=1$, we obtain
\[S_5=\left \{0, 1, \frac{1}{2}, \frac{1}{4}, 2, \frac{3}{4}, \frac{5}{4}, \frac{3}{2}, -\frac{1}{4}, \frac{7}{4}\right \},\]
so there are ten different expressions involving $a_5$ for $p\geq 11$. Nevertheless, we always have $-\frac{1}{2}\notin S_5$.
Therefore we have the following result.

\begin{theorem}
Let $p\geq 7$ and $(a_3,a_4)\in \{\left (\frac{1}{4},-\frac{1}{8}\right ),(\frac{3}{4},\frac{3}{8})\}\mod{p}$.
\begin{enumerate}
	\item If $p=7$ then $A_5(1,1,a_3,a_4,a_5)$ has $6$ different minors involving $a_5$;
	\item If $p>7$ then $A_5(1,1,a_3,a_4,a_5)$ has $7$ different minors involving $a_5$;
	\item If $p\geq 7$ then $A_5(1,1,a_3,a_4,\frac{1}{4})$ is LT-superregular;	
	\item If $p\geq 11$ then $A_5(1,1,\frac{1}{2},1,-\frac{1}{2})$ is LT-superregular.
\end{enumerate}
\end{theorem}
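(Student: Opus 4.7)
The proof plan relies entirely on the explicit forbidden sets $S_5$ computed just before the statement for each pair $(a_3,a_4)$. Since every minor of $A_5$ that involves $a_5$ is linear in $a_5$, such a minor is proportional to $a_5-v$ for some rational expression $v$ in $a_3,a_4$, and two minors are declared the "same" precisely when they produce the same value $v\in\F_p$. Hence the whole theorem decomposes into two tasks: (i) count the distinct elements of $S_5$ in $\F_p$ (parts 1 and 2), and (ii) certify that a specific value of $a_5$ misses $S_5$ in $\F_p$ (parts 3 and 4).

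For (i), the plan is to first read off the distinct elements of $S_5$ over $\mathbb{Q}$. For $(a_3,a_4)=(1/4,-1/8)$ the list collapses (over $\mathbb{Q}$) to the $7$-element set $\{0,-1/8,-1/32,1/16,-5/16,-7/32,-1/2\}$, and for $(3/4,3/8)$ to $\{0,3/8,9/32,9/16,3/16,3/32,1/16\}$. Then I would compute every pairwise difference of these $7$ values and factor its numerator, verifying that each numerator has only the primes $\{3,5,7\}$ in its support. Under $p\geq 7$ the only residual coincidence possible is at $p=7$, and a direct check shows that it does occur: $-7/32\equiv 0\pmod 7$ in the first case and $9/32\equiv 1/16\pmod 7$ in the second, as already flagged in the paragraphs preceding the theorem. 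This yields $6$ distinct values when $p=7$ and $7$ when $p>7$, proving parts 1 and 2.

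For (ii), the plan is to compute $a_5-v$ for each $v\in S_5$ and factor its numerator. For part 3, with $a_5=1/4$ and each of the two $S_5$'s from part 1, each such difference has numerator whose only prime divisors lie in $\{3,5\}$; since $p\geq 7$, none of them vanishes in $\F_p$, so $1/4\notin S_5$ and Remark~\ref{construction} (combined with the already-verified conditions $a_3\notin S_3$ and $a_4\notin S_4$, which likewise only fail at $p=3$) yields LT-superregularity. For part 4, using the set $S_5=\{0,1,1/2,1/4,2,3/4,5/4,3/2,-1/4,7/4\}$ displayed in the excerpt, the same computation applied to $a_5=-1/2$ produces numerators supported on $\{3,5,7\}$ only; thus $-1/2\notin S_5$ whenever $p\geq 11$, and the superregularity of $A_5(1,1,1/2,1,-1/2)$ follows.

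The main obstacle is purely bookkeeping rather than conceptual: one must enumerate all pairwise differences systematically and not miss any accidental coincidence modulo a small prime. Since every quantity involved has denominator a power of $2$ (hence invertible in odd characteristic) and numerator of small absolute value, the factorizations are immediate by inspection, and the argument splits cleanly into a finite number of elementary congruence checks.
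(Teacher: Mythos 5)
Your proposal is correct and follows essentially the same route as the paper: the paper's proof simply points back to the displayed sets $S_5$ for $(a_3,a_4)\in\{(\frac14,-\frac18),(\frac34,\frac38)\}$ and for $(\frac12,1)$, counts the coincidences to get statements 1 and 2, and deduces 3 and 4 from $a_3\notin S_3$, $a_4\notin S_4$, $a_5\notin S_5$ via the recursive construction of Remark~\ref{construction}. Your explicit pairwise-difference/prime-support bookkeeping is just a more careful spelling-out of the ``obtained above'' and ``it is easy to see'' steps, and it checks out (the only collapses beyond characteristic $0$ occur at $p=7$, and the exclusions for $a_5=\frac14$, resp.\ $a_5=-\frac12$, fail only at primes below $7$, resp.\ below $11$).
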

\begin{proof}
The statements $1.$ and $2.$ were obtained above. Since the conditions (\ref{a3not}), (\ref{a4not}) and (\ref{a5not}) are satisfied, the statements $3.$ and $4.$ are also true.
\end{proof}

\subsection{$\gamma=6$}
We have $N_6=26$ and if $p$ is sufficiently large, $a_6\in\F_p$ and
\begin{equation}\label{a6not}
\begin{split}
a_6\notin S_6 = & \left \{0,a_5, a_3a_4, a_4^2, \frac{a_5^2}{a_4}, \frac{a_5^2-2a_3a_4a_5+a_4^3}{a_4-a_3^2}, \frac{a_3a_5-a_4^2+a_4a_5}{a_3}, a_5-a_3a_4+a_4^2, a_5-a_3a_4+a_3a_5, \right .\\
& \frac{a_4a_5+a_3^2a_5-a_3a_4^2-a_5^2}{a_3-a_4}, \frac{a_3a_5+a_4^2-a_3^2a_4-a_4a_5}{1-a_3}, \frac{a_5-2a_3a_4+a_3^3+2a_4^2-a_3^2a_4-a_4a_5}{1-a_3},\\
 & -1+4a_3-3a_4-3a_3^2+2a_5+2a_3a_4, a_3a_5, a_3(2a_5-a_3a_4), a_3(a_4-a_3^2+a_5), \frac{a_4a_5}{a_3},\\
 &  -(a_4-a_3^2-a_5+a_3^3-a_3a_5), -(a_4-a_3^2-2a_5+2a_3a_4-a_4^2), -a_3^2+2a_3a_4, -a_4+2a_5,\\
 & -a_3^2+a_5+a_3a_4, -a_4+a_5+a_3a_4, a_3-2a_4-a_3^2+2a_5+a_3a_4, \\
 & \left . a_3-a_4-2a_3^2+a_5+2a_3a_4,\frac{2a_3a_5+a_4^2-3a_3^2a_4+a_3^4-2a_4a_5-2a_3^2a_5+2a_3a_4^2+a_5^2}{1-2a_3+a_4}\right \},
\end{split}
\end{equation}
then all of the minors involving $a_6$ are nonzero (there are at most $26$ minors). We will show that for any prime $p\geq 11$ we can choose $a_3, a_4$ and $a_5$ such that the number of elements of $S_6$ is at most $14$ (being smaller than $14$ for $93.75\%$ of the primes, because it will only be $14$ if and only if $p\equiv 87,107\mod{120}$, as we will see below).

The following result about quadratic residues will be helpful.
\begin{Lemma}
Let $p>5$ be an odd prime number. Then
\begin{enumerate}
	\item $x^2\equiv -1\mod{p}$ is solvable if and only if $p\equiv 1\mod{4}$;
	\item $x^2\equiv 2\mod{p}$ is solvable if and only if $p\equiv\pm 1\mod{8}$;
	\item $x^2\equiv 3\mod{p}$ is solvable if and only if $p\equiv\pm 1\mod{12}$;
	\item $x^2\equiv -3\mod{p}$ is solvable if and only if $p\equiv 1\mod{3}$;
	\item $x^2\equiv 5\mod{p}$ is solvable if and only if $p\equiv\pm 1\mod{5}$;
\end{enumerate}
\end{Lemma}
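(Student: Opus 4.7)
The plan is to invoke three standard tools from elementary number theory: Euler's criterion (an element $a\in\F_p^*$ is a quadratic residue modulo $p$ if and only if $\left(\frac{a}{p}\right)=1$), the two supplementary laws
\[
\left(\frac{-1}{p}\right)=(-1)^{(p-1)/2},\qquad \left(\frac{2}{p}\right)=(-1)^{(p^2-1)/8},
\]
and the law of quadratic reciprocity $\left(\frac{q}{p}\right)\left(\frac{p}{q}\right)=(-1)^{\frac{p-1}{2}\cdot\frac{q-1}{2}}$ for distinct odd primes $p,q$.

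Parts (1) and (2) follow directly. Item (1) is the first supplementary law, which equals $1$ exactly when $(p-1)/2$ is even, i.e.\ $p\equiv 1\pmod 4$. Item (2) is the second supplementary law, and a quick case analysis on $p\bmod 8$ (the only cases are $1,3,5,7$ since $p$ is odd) shows $(p^2-1)/8$ is even precisely when $p\equiv \pm 1\pmod 8$.

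For item (3) I would apply reciprocity to $3$ and $p$:
\[
\left(\tfrac{3}{p}\right)=(-1)^{(p-1)/2}\left(\tfrac{p}{3}\right).
\]
Since the quadratic residues modulo $3$ are just $\{1\}$, one has $\left(\frac{p}{3}\right)=1$ iff $p\equiv 1\pmod 3$ and $\left(\frac{p}{3}\right)=-1$ iff $p\equiv 2\pmod 3$. Splitting into the four cases for $p$ modulo $12$ (using the Chinese remainder theorem to combine $p\bmod 4$ with $p\bmod 3$) shows that the product above equals $1$ exactly when $p\equiv \pm 1\pmod{12}$. For item (4), I would write $\left(\frac{-3}{p}\right)=\left(\frac{-1}{p}\right)\left(\frac{3}{p}\right)$; substituting the expression from item (3) makes the factors $(-1)^{(p-1)/2}$ cancel, leaving $\left(\frac{-3}{p}\right)=\left(\frac{p}{3}\right)$, which is $1$ exactly when $p\equiv 1\pmod 3$. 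For item (5), since $5\equiv 1\pmod 4$, the sign in reciprocity is trivial and $\left(\frac{5}{p}\right)=\left(\frac{p}{5}\right)$. The quadratic residues modulo $5$ are $\{1,4\}=\{\pm 1\}$, so $\left(\frac{p}{5}\right)=1$ iff $p\equiv \pm 1\pmod 5$.

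There is no genuine obstacle here; each item reduces to a short computation with Legendre symbols. The only point requiring a little care is the case analysis in item (3), where one must check all residues of $p$ modulo $12$ to convert simultaneous conditions modulo $3$ and modulo $4$ into the single condition $p\equiv\pm 1\pmod{12}$.
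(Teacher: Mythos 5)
Your proposal is correct and follows essentially the same route as the paper, which simply cites the two supplementary laws for items (1)--(2) and quadratic reciprocity for items (3)--(5); you have merely written out the short Legendre-symbol computations that the paper leaves implicit.
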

\begin{proof}
	$1.$ and $2.$ are well known results. Using the quadratic reciprocity law we easily obtain the remaining statements.
\end{proof}

We are able to state and formally prove a result about the minimum number of elements of $S_6$ for any $p\geq 11$ (computer search using Maple, helped us identify the necessary conditions).

\begin{theorem}\label{theoA6}
Let $p\geq 11$, and for each $u\in\{-3,-1,2,3,5\}$ denote by $\sqrt{u}$ one of the two solutions of $x^2\equiv u\mod{p}$, when they exist. Then
\begin{enumerate}
	\item If $p\equiv 1\mod{4}$ and $(a_3,a_4,a_5)=\left (\frac{1}{2},\frac{1+\sqrt{-1}}{4},\frac{1+2\sqrt{-1}}{8}\right )$ then $\mid S_6\mid =13$, except when $p=13$ or $p=17$, in which case we have $\mid S_6\mid =12$;
	\item If $p\equiv \pm 1\mod{8}$ and $(a_3,a_4,a_5)=\left (\frac{1}{2},\frac{\sqrt{2}+2}{8},\frac{\sqrt{2}+1}{8}\right )$ then $\mid S_6\mid =13$, except when $p=23$, in which case we have $\mid S_6\mid =12$;
	\item If $p\equiv 1\mod{3}$ and $(a_3,a_4,a_5)=\left (\frac{1}{2},\frac{3+\sqrt{-3}}{8},\frac{2+\sqrt{-3}}{8}\right )$ then $\mid S_6\mid =13$, except when $p=13$, in which case we have $\mid S_6\mid =11$;
	\item If $p\equiv\pm 1\mod{5}$ and $(a_3,a_4,a_5)=\left (\frac{1}{2},\frac{1+\sqrt{5}}{8},\frac{\sqrt{5}}{8}\right )$ then $\mid S_6\mid =13$, except when $p=11$, in which case we have $\mid S_6\mid =11$;
	\item If $p\equiv\pm 1\mod{12}$ and $(a_3,a_4,a_5)=\left (\frac{1}{2},\frac{\sqrt{3}-1}{4},\frac{2\sqrt{3}-3}{8}\right )$ then $\mid S_6\mid =14$, except when $p=11$, in which case we have $\mid S_6\mid =10$, when $p=13$, in which case we have $\mid S_6\mid =12$ and $p=23$ or $p=61$ in which case we have $\mid S_6\mid =13$.
\end{enumerate}	
Moreover, if $(a_3,a_4,a_5)$ is any of the vectors above for an appropriate prime $p$, except when $p=11$ and  $(a_3,a_4,a_5)=\left (\frac{1}{2},\frac{1+\sqrt{5}}{8},\frac{\sqrt{5}}{8}\right )$, then $A_6(1,1,a_3,a_4,a_5,a_6)$ is LT-superregular, for any $a_6\notin S_6\mod{p}$.
\end{theorem}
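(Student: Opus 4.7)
The proof is essentially a case-by-case verification of five analogous claims, and I would organise it accordingly. For each case I would first verify the preliminary non-exclusion conditions: since $a_3=1/2$ throughout, condition (\ref{a3not}) is immediate for $p\geq 3$; condition (\ref{a4not}) reduces to checking $a_4\notin\{0,1/2,1/4\}$; condition (\ref{a5not}) is a finite list of inequalities in $a_3,a_4,a_5$. Each chosen $(a_4,a_5)$ lives in $\F_p[\sqrt{u}]$ for one of $u\in\{-3,-1,2,3,5\}$, so these checks reduce to showing that certain elements of $\F_p[\sqrt{u}]$ are nonzero, which one verifies by expanding in the basis $\{1,\sqrt{u}\}$ and ruling out both coefficients vanishing modulo $p$; a short list of small forbidden primes emerges from this step and is exactly covered by the hypothesis $p\geq 11$.

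Next, for the bulk of the argument I would substitute the prescribed $(a_3,a_4,a_5)$ into each of the $26$ expressions listed in (\ref{a6not}) and evaluate. The fact that the five choices of $(a_4,a_5)$ were engineered from a computer search means that numerous pairs among these $26$ expressions coincide as elements of $\F_p[\sqrt{u}]$ \emph{identically} in $p$. Each such coincidence is an algebraic identity that I would certify by direct expansion. Collecting the equivalence classes yields the claimed generic counts $|S_6|=13$ in cases 1--4 and $|S_6|=14$ in case 5. For the exceptional small primes (e.g.\ $p=13,17$ in case~1; $p=23$ in case~2; $p=13$ in case~3; $p=11$ in case~4; $p=11,13,23,61$ in case~5) I would verify the extra collapses separately by reducing the relevant differences modulo the specific prime, giving the smaller stated values of $|S_6|$.

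Finally, for the LT-superregularity conclusion I would invoke Remark \ref{construction}: once (\ref{a3not})--(\ref{a5not}) are satisfied and one exhibits any $a_6\in\F_p\setminus S_6$, the matrix $A_6(1,1,a_3,a_4,a_5,a_6)$ has all non-trivial minors nonzero. Since $|S_6|\leq 14<p$ for every $(p,\text{case})$ in the statement \emph{except} $(p,\text{case})=(11,\text{case 4})$ (where $|S_6|=11=p$ forces $S_6=\F_{11}$ and no valid $a_6$ exists), the conclusion follows exactly in the form claimed, with the single excluded pair.

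The main obstacle is the sheer volume of algebra: in principle one is comparing $\binom{26}{2}=325$ pairs of expressions in $\F_p[\sqrt{u}]$ and tracking which coincidences are generic in $p$ versus which only hold modulo a small prime. In practice the bookkeeping is made tractable by the fact that $a_3=1/2$ simplifies many denominators in (\ref{a6not}) (for instance $1-2a_3+a_4=a_4$), and by the fact that the numerators $a_4,a_5$ were chosen so that $\sqrt{u}$-coefficients cancel in the differences that define each coincidence. The discovery of the right $(a_4,a_5)$ is the genuinely hard step and has been done by the computer search mentioned by the authors; the proof itself is a verification, which is why I would present it as a structured catalogue of identities rather than rederive the choices.
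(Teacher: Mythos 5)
Your plan matches the paper's proof in essence: both substitute the prescribed $(a_3,a_4,a_5)$ into the $26$ expressions of (\ref{a6not}), certify the coincidences that hold identically in $\F_p[\sqrt{u}]$ to obtain the generic counts $13$ (resp.\ $14$), treat the listed small primes by explicit reduction, check $a_3\notin S_3$, $a_4\notin S_4$, $a_5\notin S_5$ by the same coefficient argument, and conclude LT-superregularity via the iterative construction of Remark~\ref{construction}, excluding only $p=11$ in case~4 where $S_6=\F_{11}$. The only difference is presentational: the paper additionally records how the Maple search and the system of two equations (one element of $S_6$ forced to vanish, two others forced to coincide) produced the $(a_4,a_5)$, but that is motivation for the choices rather than part of the verification you describe.
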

\begin{proof}
For each prime $11\leq p\leq 107$, and using Maple, we found for which values of $(a_3, a_4, a_5)$ we would achieve the minimum of $\mid S_6\mid\mod{p}$ and after identifying which elements of $S_6$ become equal, we deduce the expressions stated in the theorem for $(a_3, a_4, a_5)$. For each prime $11\leq p\leq 107$, there is at least one vector $(a_3,a_4,a_5)$ with $a_3\equiv \frac{1}{2}\mod{p}$ for which $\mid S_6\mid\mod{p}$ is minimal. So we assume $a_3\equiv \frac{1}{2}\mod{p}$. Although, we use Maple to deduce expressions for $a_4$ and $a_5$, the following arguments are valid, for every $p\geq 11$.

Suppose $p\equiv 1\mod{4}$, with the calculations in Maple we found that $\mid S_6\mid\mod{p}$ is minimal when the thirteenth element of $S_6$ is null and the forth and twenty third elements are equal, i.e.
\begin{align*}
-1+4a_3-3a_4-3a_3^2+2a_5+2a_3a_4 & =  0\\
-a_4+a_5+\frac{a_4}{2} & =   a_4^2.
\end{align*}
Solving this system of equations, we obtain
\[(a_4,a_5)\in\left \{\left (\frac{\sqrt{-1}+1}{4},\frac{2\sqrt{-1}+1}{8}\right ),\left (\frac{1-\sqrt{-1}}{4},\frac{1-2\sqrt{-1}}{8}\right )\right \}.\]
Substituting the first solution in $S_6$, we obtain
\[\begin{split}
S_6=\left \{0, \frac{3\sqrt{-1}-1}{16}, \frac{2\sqrt{-1}+1}{8}, \right . & \frac{\sqrt{-1}+1}{8}, \frac{4\sqrt{-1}+1}{16}, \frac{2\sqrt{-1}+1}{16}, \frac{3\sqrt{-1}+1}{16}, \\
& \left .\frac{5\sqrt{-1}+1}{32}, \frac{7\sqrt{-1}+1}{32}, \frac{\sqrt{-1}}{4}, \frac{\sqrt{-1}}{8}, \frac{3\sqrt{-1}}{8}, \frac{3\sqrt{-1}}{16}\right \}.
\end{split}\]
Notice that $S_6$ has at most $13$ elements, for every prime for which $\sqrt{-1}$ exists. If $p=13$ and we take $\sqrt{-1}=5$ then  $\frac{5\sqrt{-1}+1}{32}=0$ and if we take $\sqrt{-1}=8$ then $\frac{7\sqrt{-1}+1}{32}=\frac{3\sqrt{-1}}{8}$, it can be seen that for $p=17$ we also have $\mid S_6\mid=12$, so the statement $1.$ is obtained. Notice that the second solution is also in statement $1.$, since the solutions of $x^2\equiv -1\mod{p}$ are symmetric.

Suppose $p\equiv\pm 1\mod{8}$, with the calculations in Maple we found that $\mid S_6\mid\mod{p}$ is minimal when the thirteenth element of $S_6$ (in the expression (\ref{a6not})) is null and the third and fifth elements are equal, i.e.
\begin{align*}
-1+4a_3-3a_4-3a_3^2+2a_5+2a_3a_4 & =  0\\
 \frac{a_5^2}{a_4} & =  \frac{a_4}{2}.
\end{align*}
Solving this system of equations, we obtain
\[(a_4,a_5)\in\left \{\left (\frac{\sqrt{2}+2}{8},\frac{\sqrt{2}+1}{8}\right ),\left (\frac{2-\sqrt{2}}{8},\frac{1-\sqrt{2}}{8}\right )\right \}.\]
Substituting the first solution in $S_6$, we obtain
\[\begin{split}
S_6=\left \{0, \frac{1}{8}\sqrt{2}, \frac{1}{16}\sqrt{2}, \frac{3}{16}\sqrt{2}, \frac{1}{8}+\frac{3}{32}\sqrt{2}, \right . & \frac{3}{32}+\frac{1}{16}\sqrt{2}, \frac{3}{32}+\frac{3}{32}\sqrt{2}, \frac{1}{8}+\frac{1}{8}\sqrt{2}, \frac{1}{16}+\frac{1}{8}\sqrt{2},\\
&  \left .  \frac{3}{32}+\frac{1}{8}\sqrt{2}, \frac{1}{8}+\frac{1}{16}\sqrt{2}, \frac{1}{16}+\frac{1}{16}\sqrt{2}, \frac{1}{16}+\frac{3}{32}\sqrt{2}\right \}.
\end{split}\]
Here, $S_6$ has also at most $13$ elements, for every prime for which $\sqrt{2}$ exists.
If $p=23$ and we consider $\sqrt{2}=5$ then $\frac{3}{32}+\frac{1}{8}\sqrt{2}=0$ and if we consider $\sqrt{2}=18$, we also obtain $\mid S_6\mid=12$, hence the statement $2.$ is proved. Notice that the second solution is also in statement $2.$, since the solutions of $x^2\equiv 2\mod{p}$ are symmetric.

Suppose $p\equiv 1\mod{3}$ with the calculations in Maple we found that $\mid S_6\mid\mod{p}$ is minimal when the thirteenth element of $S_6$ (in the expression (\ref{a6not})) is null and the fourth and tenth elements are equal, i.e.
\begin{align*}
-1+4a_3-3a_4-3a_3^2+2a_5+2a_3a_4 & =  0\\
\frac{a_4a_5+a_3^2a_5-a_3a_4^2-a_5^2}{a_3-a_4} & =   a_4^2.
\end{align*}
Solving this system of equations, we obtain
\[(a_4,a_5)\in\left \{\left (\frac{\sqrt{-3}+3}{8},\frac{\sqrt{-3}+2}{8}\right ),\left (\frac{3-\sqrt{-3}}{8},\frac{2-\sqrt{-3}}{8}\right ),\left (\frac{1}{4},\frac{1}{8}\right )\right \},\]
but from Theorem \ref{a4theo} we cannot have $a_4=\frac{1}{4}$. Substituting the first solution in $S_6$, we obtain
\[\begin{split} S_6=\left \{0, \right . & \frac{1}{8}+\frac{1}{8}\sqrt{-3}, \frac{3}{16}+\frac{1}{8}\sqrt{-3}, \frac{3}{16}+\frac{3}{16}\sqrt{-3}, \frac{3}{32}+\frac{5}{32}\sqrt{-3}, \frac{1}{4}+\frac{1}{8}\sqrt{-3}, \frac{1}{8}+\frac{1}{16}\sqrt{-3},\\
& \left .\frac{1}{16}+\frac{1}{16}\sqrt{-3}, \frac{3}{16}+\frac{1}{16}\sqrt{-3}, \frac{3}{32}+\frac{3}{32}\sqrt{-3}, \frac{5}{32}+\frac{3}{32}\sqrt{-3}, \frac{5}{32}+\frac{5}{32}\sqrt{-3}, \frac{5}{32}+\frac{11}{96}\sqrt{-3}\right \}.
\end{split}\]
Again, $S_6$ has at most $13$ elements, for every prime for which $\sqrt{-3}$ exists.
As before, it can be seen that if $p=13$ then $\mid S_6\mid=11$ and so, we obtain the statement $3.$. Notice that the second solution is also in statement $3.$, since the solutions of $x^2\equiv -3\mod{p}$ are symmetric.

Suppose $p\equiv\pm 1\mod{5}$, with the calculations in Maple we found that $\mid S_6\mid\mod{p}$ is minimal when the thirteenth element of $S_6$ (in the expression (\ref{a6not})) is null and the third and sixth elements are equal, i.e.
\begin{align*}
-1+4a_3-3a_4-3a_3^2+2a_5+2a_3a_4 & =  0\\
\frac{4(a_4^3-a_4a_5+a_5^2)}{4a_4-1}& =  \frac{a_4}{2}.
\end{align*}
Solving this system of equations, we obtain
\[(a_4,a_5)\in\left \{\left (\frac{\sqrt{5}+1}{8},\frac{\sqrt{5}}{8}\right ),\left (\frac{1-\sqrt{5}}{8},-\frac{\sqrt{5}}{8}\right )\right \}.\]
Substituting the first solution in $S_6$, we obtain
\[\begin{split}
\left \{0, \frac{1}{8}\sqrt{5}, \frac{1}{16}\sqrt{5},\right . & -\frac{5}{32}+\frac{5}{32}\sqrt{5}, -\frac{3}{16}+\frac{3}{16}\sqrt{5}, -\frac{1}{8}+\frac{1}{8}\sqrt{5}, \frac{3}{32}+\frac{1}{32}\sqrt{5},\\ & \left . -\frac{1}{16}+\frac{1}{8}\sqrt{5}, -\frac{1}{16}+\frac{1}{16}\sqrt{5}, \frac{1}{16}+\frac{1}{16}\sqrt{5}, \frac{5}{32}+\frac{1}{32}\sqrt{5}, -\frac{1}{32}+\frac{3}{32}\sqrt{5}, \frac{1}{32}+\frac{3}{32}\sqrt{5}\right \}.
\end{split}\]
So, $S_6$ has at most $13$ elements, for every prime for which $\sqrt{5}$ exists.
Clearly, if $p=11$ then $\mid S_6\mid\leq 11$, but it can be seen that all the elements of $\F_{11}$ are in $S_6\mod{11}$. So, statement $4.$ is obtained. Notice that the second solution is also in statement $4.$, since the solutions of $x^2\equiv 5\mod{p}$ are symmetric.

Suppose $p\equiv\pm 1\mod{12}$, with the calculations in Maple we found that $\mid S_6\mid\mod{p}$ is minimal when the eighth and the thirteenth elements of $S_6$ (in the expression (\ref{a6not})) are null, i.e.
\begin{align*}
a_5-\frac{a_4}{2}+a_4^2 & =0\\
-1+4a_3-3a_4-3a_3^2+2a_5+2a_3a_4 & =0.
\end{align*}
Solving this system of equations, we obtain
\[(a_4,a_5)\in\left \{\left (\frac{\sqrt{3}-1}{4},\frac{2\sqrt{3}-3}{8}\right ),\left (-\frac{\sqrt{3}+1}{4},-\frac{2\sqrt{3}+3}{8}\right )\right \}.\]
Substituting the first solution in $S_6$, we obtain
\[\begin{split}
\left \{0,\right . & -\frac{15}{32}+\frac{9}{32}\sqrt{3}, -\frac{7}{32}+\frac{13}{96}\sqrt{3}, -\frac{3}{4}+\frac{3}{8}\sqrt{3}, -\frac{1}{2}+\frac{1}{4}\sqrt{3}, \frac{1}{4}-\frac{1}{8}\sqrt{3}, \frac{1}{8}-\frac{1}{16}\sqrt{3},\\
& \left . \frac{9}{16}-\frac{5}{16}\sqrt{3}, -\frac{7}{16}+\frac{1}{4}\sqrt{3}, -\frac{3}{8}+\frac{1}{4}\sqrt{3}, -\frac{3}{16}+\frac{1}{8}\sqrt{3}, -\frac{1}{4}+\frac{1}{8}\sqrt{3}, -\frac{1}{8}+\frac{1}{8}\sqrt{3}, -\frac{5}{16}+\frac{3}{16}\sqrt{3}\right \}.
\end{split}\]
This time $S_6$ has at most $14$ elements, for every prime for which $\sqrt{3}$ exists.
If $p=11$ and we consider $\sqrt{3}=5$, then
\begin{align*}
-\frac{7}{32}+\frac{13}{96}\sqrt{3} & =\,\, 0,\\
-\frac{3}{4}+\frac{3}{8}\sqrt{3} & =\, -\frac{3}{16}+\frac{1}{8}\sqrt{3},\\
-\frac{1}{4}+\frac{1}{8}\sqrt{3} & =\, \frac{9}{16}-\frac{5}{16}\sqrt{3},\\
-\frac{1}{8}+\frac{1}{8}\sqrt{3} & =\, \frac{1}{8}-\frac{1}{16}\sqrt{3}.
\end{align*}
The other exceptions can also be obtained and so statement $5.$ is satisfied. Notice that the second solution is also in statement $5.$, since the solutions of $x^2\equiv 3\mod{p}$ are symmetric.

To complete the proof, we need to show that $a_3=\frac{1}{2}\notin S_3$, $a_4\notin S_4$ and $a_5\notin S_5$. Clearly $\frac{1}{2}\notin S_3$. Since $a_3=\frac{1}{2}$ in all cases, then $S_4=\{0,\frac{1}{2},\frac{1}{4}\}$.

If $\frac{1}{4}+\frac{1}{4}\sqrt{-1}\in S_4$ then we would get $\sqrt{-1}\in\{-1,0,1\}$ which is impossible for $p>2$.

If $\frac{1}{4}+\frac{1}{8}\sqrt{2}\in S_4$ then we would get $\sqrt{2}\in\{-2,0,2\}$ which is impossible for $p>2$.

If $\frac{3}{8}+\frac{1}{8}\sqrt{-3}\in S_4$ then we would get $\sqrt{-3}\in\{-3,-1,1\}$ which is impossible for $p>3$.

If $\frac{1}{8}+\frac{1}{8}\sqrt{5}\in S_4$ then we would get $\sqrt{5}\in\{-1,1,3\}$ which is impossible for $p>2$.

If $-\frac{1}{4}+\frac{1}{4}\sqrt{3}\in S_4$ then we would get $\sqrt{3}\in\{1,2,3\}$ which is impossible for $p>3$.

Therefore, $a_4\notin S_4$.

In the case $p\equiv 1\mod{4}$, if $a_4=\frac{1}{4}+\frac{1}{4}\sqrt{-1}$, then
\[S_5=\left \{0, \frac{1}{4}, \frac{1}{2}\sqrt{-1}, \frac{1}{4}\sqrt{-1}, \frac{1}{4}+\frac{1}{2}\sqrt{-1}, \frac{1}{4}+\frac{1}{4}\sqrt{-1}, \frac{1}{8}+\frac{1}{8}\sqrt{-1}, \frac{1}{8}+\frac{3}{8}\sqrt{-1}\right \}.\]
It is not difficult to see that if $a_5=\frac{1}{8}+\frac{1}{4}\sqrt{-1}\in S_5$ then $\sqrt{-1}\in\left \{-\frac{1}{2},0,\frac{1}{2}\right \}$, which is only true if $p=5$.

In the case $p\equiv\pm 1\mod{8}$, if $a_4=\frac{1}{4}+\frac{1}{8}\sqrt{2}$, then
\[S_5=\left \{0, \frac{1}{4}, \frac{1}{4}\sqrt{2}, \frac{1}{8}\sqrt{2}, \frac{1}{4}+\frac{1}{4}\sqrt{2}, \frac{1}{4}+\frac{1}{8}\sqrt{2}, \frac{1}{8}+\frac{1}{16}\sqrt{2}, \frac{1}{8}+\frac{3}{16}\sqrt{2}, \frac{1}{16}+\frac{1}{8}\sqrt{2},\frac{3}{16}+\frac{1}{8}\sqrt{2}\right \}.\]
If $a_5=\frac{1}{8}+\frac{1}{8}\sqrt{2}\in S_5$ then $\sqrt{2}\in\left \{-1,0,1\right \}$, which is never true.

If $p\equiv 1\mod{3}$ and $a_4=\frac{3}{8}+\frac{1}{8}\sqrt{-3}$, then
\[\begin{split}
S_5=\left \{0, \frac{1}{4}, \frac{1}{2}+\frac{1}{4}\sqrt{-3}, \frac{1}{4}+\frac{1}{4}\sqrt{-3}, \frac{1}{8}+\frac{1}{8}\sqrt{-3}, \frac{3}{8}+\frac{1}{8}\sqrt{-3}, \right . & \frac{3}{16}+\frac{1}{16}\sqrt{-3}, \frac{3}{16}+\frac{3}{16}\sqrt{-3},\\
& \left . \frac{5}{16}+\frac{1}{16}\sqrt{-3}, \frac{5}{16}+\frac{3}{16}\sqrt{-3}\right \}.
\end{split}\]
If $a_5=\frac{1}{4}+\frac{1}{8}\sqrt{-3}\in S_5$ then $\sqrt{-3}\in\left \{-2,-1,0,1\right \}$, which is only true if $p=7$.

If $p\equiv\pm 1\mod{5}$ and $a_4=\frac{1}{8}+\frac{1}{8}\sqrt{5}$, then
\[\begin{split}
S_5=\left \{0, \frac{1}{4}, \frac{1}{4}\sqrt{5}, -\frac{3}{16}+\frac{3}{16}\sqrt{5}, -\frac{1}{4}+\frac{1}{4}\sqrt{5}, -\frac{1}{8}+\frac{1}{8}\sqrt{5}, -\frac{1}{16}+\frac{3}{16}\sqrt{5}, \right . & \frac{1}{8}+\frac{1}{8}\sqrt{5},\\
& \left .  \frac{1}{16}+\frac{1}{16}\sqrt{5}, \frac{3}{16}+\frac{1}{16}\sqrt{5}\right \}.
\end{split}\]
If $a_5=\frac{1}{8}\sqrt{5}\in S_5$ then $\sqrt{5}\in\left \{0,1,2,3\right \}$, which is never true.

If $p\equiv\pm 1\mod{12}$ and $a_4=-\frac{1}{4}+\frac{1}{4}\sqrt{3}$, then
\[\begin{split}
S_5=\left \{0, \frac{1}{4}, -1+\frac{1}{2}\sqrt{3}, -\frac{5}{4}+\frac{3}{4}\sqrt{3}, -\frac{5}{8}+\frac{3}{8}\sqrt{3},  \right . & -\frac{3}{4}+\frac{1}{2}\sqrt{3}, -\frac{1}{2}+\frac{1}{4}\sqrt{3}, \\
& \left . -\frac{1}{4}+\frac{1}{4}\sqrt{3}, -\frac{1}{8}+\frac{1}{8}\sqrt{3}, \frac{1}{2}-\frac{1}{4}\sqrt{3}\right \}.
\end{split}\]
If $a_5=-\frac{3}{8}+\frac{1}{4}\sqrt{3}\in S_5$ then $\sqrt{3}\in\left \{\frac{3}{2},\frac{7}{4},2,\frac{5}{2}\right \}$, which is never true.

Hence, we obtain the last statement.
\end{proof}

\begin{table}[t]
	\begin{center}
		\begin{tabular}{|c|c|c|c|}\hline
			$\gamma$ & $N_\gamma+1$ & Number of minors & prime field sizes\\\hline
			$6$ & $27$  & \(\begin{tabular}{c}
			10\\
			11\\
			12 \\
			13 \\
			14
			\end{tabular}\)  & \begin{tabular}{c}
				11\\
				13\\
				17 \mbox{or} 23\\
				$p\geq 19$, $p\neq 23$\mbox{ and } $p\not\equiv 83,107\mod{120}$\\
				$p\equiv 83,107\mod{120}$\\
			\end{tabular}\\\hline
		\end{tabular}
	\end{center}
	\caption{Number of minors of $A_6$ involving $a_6$ for each prime field}\label{tabA6}
\end{table}

\begin{remark}
It is not difficult to obtain from Theorem \ref{theoA6} the minimum number of minors of $A_6$ involving $a_6$, for each appropriate prime $p$. These numbers are detailed in Table \ref{tabA6}.
\end{remark}

Theorem \ref{theoA6} shows that, for each prime $p\geq 11$, whenever we choose $a_6$ appropriately, we obtain an LT-superregular matrix. The next question is if it is possible to choose $a_6$ so that $A_6(1,1,\frac{1}{2},a_4,a_5,a_6)$ is LT-superregular for all, or at least many, of the primes in each of the arithmetic progressions above. The next result answers this question.

\begin{corollary}
	Suppose we have $a_3, a_4$ and $a_5$ as in Theorem \ref{theoA6}, for each of the arithmetic progressions considered. Then
	\begin{enumerate}
		\item If $p=11$ then the matrix $A_6(1,1,6,1,5,4)$ is LT-superregular.
		\item if $p=13$ then all the matrices $A_6(1,1,7,8,3,2)$, $A_6(1,1,7,4,12,9)$, $A_6(1,1,7,6,1,2)$ and $A_6(1,1,7,6,1,4)$ are LT-superregular.
		\item if $p\equiv 1\mod{4}$ and $p\geq 17$, take $a_6=\frac{1}{4}$ (if $p=37$, consider $\sqrt{-1}=6$ in the expressions of $a_4$ and $a_5$). Then $A_6(1,1,\frac{1}{2},a_4,a_5,a_6)$ is LT-superregular.
		\item if $p\equiv\pm 1\mod{8}$ and $p\geq 17$, take $a_6=\frac{1}{4}$ (if $p=17$, consider $\sqrt{2}=6$ in the expressions of $a_4$ and $a_5$). Then $A_6(1,1,\frac{1}{2},a_4,a_5,a_6)$ is LT-superregular.
		\item if $p\equiv 1\mod{3}$ and $p\geq 19$ ,take $a_6=\frac{1}{4}$ (if $p=37$, consider $\sqrt{-3}=16$ in the expressions of $a_4$ and $a_5$). Then $A_6(1,1,\frac{1}{2},a_4,a_5,a_6)$ is LT-superregular.
		\item if $p\equiv\pm 1\mod{5}$ and $p\geq 19$, take $a_6=\frac{1}{4}$. Then $A_6(1,1,\frac{1}{2},a_4,a_5,a_6)$ is LT-superregular.
		\item if $p\equiv\pm 1\mod{12}$, $p\geq 23$ and
		\begin{description}
			\item[a)] $p\neq 37$, take $a_6=\frac{1}{4}$ (if $p=23$ consider $\sqrt{3}=16$ and if $p=73$ consider $\sqrt{3}=52$, in the expressions of $a_4$ and $a_5$). Then $A_6(1,1,\frac{1}{2},a_4,a_5,a_6)$ is LT-superregular.
			\item[b)] $p=37$, take $a_6=10$. Then $A_6(1,1,\frac{1}{2},a_4,a_5,a_6)$ is LT-superregular.
		\end{description}	
	\end{enumerate}
\end{corollary}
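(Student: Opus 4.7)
The plan is to apply Theorem \ref{theoA6} directly: for each of the arithmetic progressions, that theorem gives an explicit description of $S_6\pmod p$ and also verifies (in its own proof) the exclusion conditions (\ref{a3not}), (\ref{a4not}) and (\ref{a5not}) for the chosen triple $(a_3,a_4,a_5)$. Consequently, in every item of the corollary it suffices to check that the proposed $a_6$ does not lie in $S_6\pmod p$; LT-superregularity of $A_6(1,1,a_3,a_4,a_5,a_6)$ then follows at once from the definition of $S_6$ in (\ref{a6not}).

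For items~1 and~2, where $p=11$ and $p=13$ are fixed small primes, the proof is by direct inspection: one substitutes the numerical values $(a_3,a_4,a_5,a_6)$ into the explicit presentation of $S_6$ recorded in the proof of Theorem \ref{theoA6}, after reducing the symbolic square root modulo $p$ to a concrete integer (for example $\sqrt{-1}\equiv 5\pmod{13}$, $\sqrt{3}\equiv 5\pmod{11}$), and checks that the chosen $a_6$ avoids each of the at-most $13$ or $14$ elements. Each such verification is a finite check.

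For items~3--6, where $a_6=\tfrac{1}{4}$, the key observation is that every element of $S_6$ in the relevant case of Theorem \ref{theoA6} has the form $\alpha+\beta\sqrt{u}$ with $\alpha,\beta\in\mathbb{Z}[\tfrac12]$ and $u\in\{-1,2,-3,5\}$. Imposing $\alpha+\beta\sqrt{u}\equiv\tfrac{1}{4}\pmod p$ either reduces to a numerical congruence in the rationals (when $\beta\equiv 0\pmod p$) or forces $\sqrt{u}\equiv(\tfrac{1}{4}-\alpha)/\beta\pmod p$, a specific rational value whose square then pins $u$ down via a polynomial congruence mod~$p$; each such congruence is satisfied by only finitely many primes. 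Running through the (at most $13$) elements of $S_6$ in each case produces a short finite list of exceptional primes, and one verifies that this list is precisely the one excluded by the hypotheses ($p\ge 17$ or $p\ge 19$) of each item. The parenthetical remarks of the form ``consider $\sqrt{-1}=6$ at $p=37$'' simply fix the sign of $\sqrt{u}$ so that the concrete reduction of $S_6$ matches the symbolic form used to derive it in Theorem \ref{theoA6}.

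For item~7, corresponding to $p\equiv\pm 1\pmod{12}$, the structure of the argument is identical, but now $S_6$ has up to $14$ elements; running through them yields the same kind of polynomial congruences, whose small prime solutions give the exceptions $p=23$ and $p=73$ handled by the choices of $\sqrt{3}$, together with the genuinely obstructive case $p=37$. At $p=37$ one of the elements of $S_6$ coincides with $\tfrac{1}{4}$ no matter which square root is chosen, so $a_6=\tfrac{1}{4}$ fails; substituting $a_6=10$ and reducing all $14$ elements of $S_6$ modulo $37$ shows that $10$ is safe, which gives subcase~b. The main obstacle throughout is the bookkeeping: one must enumerate all coincidences of $\tfrac{1}{4}$ with an element of $S_6$ exhaustively, and confirm that no residual prime escapes the combination of the stated lower bound on $p$ and the explicit small-prime exceptions.
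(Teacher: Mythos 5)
Your proposal is correct and follows essentially the same route as the paper: everything reduces, via Theorem \ref{theoA6}, to checking $a_6\notin S_6$; the cases $p=11,13$ are finite checks, and for the remaining items one solves the congruences $\frac{1}{4}\equiv\alpha+\beta\sqrt{u}\pmod{p}$ element by element of $S_6$ to isolate the exceptional primes, with $p=37$ in the $p\equiv\pm1\pmod{12}$ case forcing $a_6=10$. One small correction: the parenthetical choices (e.g.\ $\sqrt{-1}=6$ at $p=37$, $\sqrt{2}=6$ at $p=17$, $\sqrt{3}=16$ at $p=23$) are needed because the \emph{other} square root makes $\frac{1}{4}\in S_6$ (for instance $\frac{1}{4}=\frac{1}{32}+\frac{5}{32}\sqrt{-1}$ when $\sqrt{-1}=31$ and $p=37$), so these exceptional primes are handled by the sign choice rather than being excluded by the bounds $p\geq 17$ or $p\geq 19$, and the choice is not merely a matter of matching the symbolic form of $S_6$.
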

\begin{proof}
	In the cases $p=11$ or $p=13$, we just wrote all the possibilities for which $a_6\notin S_6$.
	
	If $p\equiv 1\mod{4}$, with $p\geq 17$, the only instance that $\frac{1}{4}\in S_6$ is when $p=37$ and $\sqrt{-1}=31$, because $\frac{1}{4}=\frac{1}{32}+\frac{5}{32}\sqrt{-1}$.
	
	In the case $p\equiv\pm 1\mod{8}$, with $p\geq 17$, the only instance that $\frac{1}{4}\in S_6$ is when $p=17$ and $\sqrt{2}=11$, because $\frac{1}{4}=\frac{3}{32}+\frac{1}{16}\sqrt{2}$.
	
	In the case $p\equiv 1\mod{3}$, with $p\geq 17$, the only instance that $\frac{1}{4}\in S_6$ is when $p=37$ and $\sqrt{-3}=21$, because $\frac{1}{4}=\frac{5}{32}+\frac{11}{96}\sqrt{-3}$.
	
	If $p\equiv\pm 1\mod{5}$ and $p\geq 19$ then $\frac{1}{4}\notin S_6$.
	
	When $p\equiv\pm 1\mod{12}$, there are a few instances when $\frac{1}{4}\in S_6$. If $p=23$ then we must choose $\sqrt{3}=16$, since when $\sqrt{3}=7$, $\frac{1}{4}=-\frac{7}{32}+\frac{13}{96}\sqrt{3}$. If $p=73$ then we must choose $\sqrt{3}=52$, since when $\sqrt{3}=21$, $\frac{1}{4}=-\frac{7}{16}+\frac{1}{4}\sqrt{3}$. If $p=37$, we always have $\frac{1}{4}\in S_6$, because if we choose $\sqrt{3}=15$, then $\frac{1}{4}=-\frac{3}{4}+\frac{3}{8}\sqrt{3}$ and if we choose $\sqrt{3}=22$, then $\frac{1}{4}=-\frac{3}{16}+\frac{1}{8}\sqrt{3}$. Nevertheless, in this case we may take $a_6=10$ (and $a_3=19$, $a_4=33$ and $a_5=19$).
\end{proof}

\begin{remark}
	There are other possibilities for $a_6$ that make $A_6(1,1,\frac{1}{2},a_4,a_5,a_6)$ LT-superregular for many primes. For example, if $p\geq 17$, with $p\equiv 1\mod{4}$ and we choose $a_6=\frac{\sqrt{-1}}{2}$ with $\sqrt{-1}<\frac{p}{2}$ then $a_6\notin S_6$. Therefore, $A_6(1,1,\frac{1}{2},a_4,a_5,a_6)$ is LT-superregular. Notice that if $p=13$ then $a_6\in S_6$. If we chose $\sqrt{-1}>\frac{p}{2}$, then $a_6\in S_6$ when $p\in\{13,17,37,41,61\}$. More explicitly,
	
	if $p=13$ and $\sqrt{-1}=8$ then $\frac{1}{2}\sqrt{-1}=\frac{1}{16}+\frac{3}{16}\sqrt{-1}$;
	
	if $p=17$ and  $\sqrt{-1}=13$ then $\frac{1}{2}\sqrt{-1}=\frac{1}{16}+\frac{1}{4}\sqrt{-1}$;
	
	if $p=37$ and  $\sqrt{-1}=31$ then $\frac{1}{2}\sqrt{-1}=\frac{1}{16}+\frac{1}{8}\sqrt{-1}$;
	
	if $p=41$ and  $\sqrt{-1}=32$ then $\frac{1}{2}\sqrt{-1}=\frac{1}{32}+\frac{7}{32}\sqrt{-1}$;
	
	and if $p=61$ and  $\sqrt{-1}=50$ then $\frac{1}{2}\sqrt{-1}=\frac{1}{32}+\frac{5}{32}\sqrt{-1}$.
\end{remark}

We can create more examples of LT-superregular matrices using the values for $a_3, a_4$ and $a_5$ from the previous subsections.

\begin{example}
If we take $a_3=\frac{1}{4}$, $a_4=-\frac{1}{8}$ and $a_5=\frac{1}{4}$ then
\[S_6=\left \{0,-\frac{13}{32}, -\frac{7}{32}, -\frac{1}{2}, -\frac{1}{8}, -\frac{1}{32}, \frac{1}{4}, \frac{1}{16}, \frac{1}{64}, \frac{5}{8}, \frac{5}{32}, \frac{7}{16}, \frac{11}{32}, \frac{17}{32}, \frac{17}{128}, \frac{19}{64}, \frac{23}{32}, \frac{29}{32}, \frac{31}{64}, \frac{49}{64} \right \}\]
has at most $20$ elements. So, if $p\geq 23$ and $a_6=-\frac{1}{4}$, which is not in $S_6$, for any $p\geq 23$, then $A_6(1,1,a_3,a_4,a_5,a_6)$ is LT-superregular.

If we take $a_3=\frac{3}{4}$, $a_4=\frac{3}{8}$ and $a_5=\frac{1}{4}$ then
\[S_6=\left \{0,-\frac{1}{32}, \frac{1}{4}, \frac{1}{6}, \frac{1}{8}, \frac{1}{16}, \frac{3}{16}, \frac{3}{32}, \frac{3}{64}, \frac{5}{32}, \frac{7}{32}, \frac{7}{64}, \frac{9}{32}, \frac{9}{64}, \frac{11}{32}, \frac{13}{64}, \frac{13}{96}, \frac{17}{64}, \frac{17}{96}, \frac{21}{128} \right \}\]
has at most $20$ elements. So, if $p\geq 23$ and $a_6=-\frac{5}{16}$, which is not in $S_6$, for any $p\geq 23$, then $A_6(1,1,a_3,a_4,a_5,a_6)$ is LT-superregular.

If we take $a_3=\frac{1}{2}$, $a_4=1$ and $a_5=-\frac{1}{2}$ then
\[S_6=\left \{-2, -1, 0, 1, 2, -\frac{13}{8}, -\frac{11}{4}, -\frac{9}{4}, -\frac{7}{2}, -\frac{7}{4}, -\frac{5}{4}, -\frac{3}{4}, -\frac{1}{2}, -\frac{1}{4}, \frac{1}{2}, \frac{1}{4}, \frac{1}{8}, \frac{3}{4}, \frac{7}{3}, \frac{7}{4}, \frac{11}{4}, \frac{37}{16} \right \}\]
has at most $22$ elements. So, if $p\geq 23$ and $a_6=\frac{3}{2}$, which is not in $S_6$, for any $p\geq 23$, then $A_6(1,1,a_3,a_4,a_5,a_6)$ is LT-superregular.
\end{example}

\subsection{When $\gamma\geq 7$}
\begin{table}[h]
	\begin{center}
		\begin{tabular}{|c|c|c|c|}\hline
			$\gamma$ & $N_\gamma+1$ & Different minors & Field size \\ \hline
			7 & 77 & \(\begin{tabular}{c}
			16\\
			18\\
			21 \\
			24 \\
			25 \\
			28 \\
			29 \\
			30 \\
			31  \\
			32 \\
			35 \\
			36 \\
			37 \\
			38 \\
			39
			\end{tabular}\)  &\(\begin{tabular}{c}
			17\\
			19\\
			23\\
			29\\
			31 \\
			37 \\
			41 \\
			47 \\
			43 \\
			53 \\
			59 \mbox{or} 61 \\
			67 \mbox{or} 73 \\
			71 \\
			79 \\
			83, 89 \mbox{or} 97 \\
			\end{tabular}\) \\ \hline
		\end{tabular}	
	\end{center}
	\caption{Minimum number of different minors involving $a_7$ for $17\leq p\leq 97.$}\label{tabA7}
\end{table}

For $\gamma\geq 7$, the count of the minimum number of different minors involving $a_\gamma$ for every prime field $\F_p$ for which $A_\gamma$ is LT-superregular, gets much more complicated, as there are too many different values. Therefore we chose to construct examples of LT-superregular matrices for some of the finite prime fields for which $A_\gamma$ is LT-superregular, for each $7\leq \gamma\leq 10$. For each prime $p$ and each $\gamma$ we created the sets $S_\mu$, for $\mu\leq\gamma$ and tried recursively, using Maple, all the vectors $(a_3, a_4, \dots, a_{\gamma-1})\in\F_p^{\gamma-3}$, that satisfied $a_i\notin S_i$, for $3\leq i\leq\gamma-1$ in order to find the vectors $(a_3, a_4, \dots, a_{\gamma-1})$ that made $\mid S_\gamma\mid$ smallest.

Suppose $\gamma=7$ then $N_\gamma=76$. But as one can see from the Table \ref{tabA7}, there are too many minimum numbers of different minors of $A_7$ involving $a_7$. The smallest finite prime fields that have an LT-superregular matrix of order $7$ have all different minimum numbers and we were not able to find a pattern from which we could deduce general sequences as we did in the case $\gamma=6$. Nevertheless, we are able to exhibit LT-superregular matrices for every $p\geq 17$.

If $p=17$ there are $8$ LT-superregular matrices $A_7$, one of which is $A_7(1,1,9,3,5,1,3)$. For this example, $\mid S_4\mid=3$,  $\mid S_5\mid=8$, $\mid S_6\mid=13$ and $\mid S_7\mid=16$. If $p=19$ there are $82$ LT-superregular matrices $A_7$, one of which is $A_7(1,1,10,13,1,18,7)$. The number of elements of $S_4, S_5$ and $S_6$ are also $3, 8$ and $13$ respectively, and $\mid S_7\mid=18$. If $p=23$ there are only two examples of LT-superregular matrices, which are $A_7(1,1,4,19,6,4,8)$ and $A_7(1,1,4,19,6,4,15)$. It is interesting to notice that $S_4$ and $S_5$ achieve the maximum number of elements in these two examples while $S_6$ has $17$ elements. Hence, sometimes $S_\gamma$ has the minimum number of elements when some of the $S_\mu$, with $\mu<\gamma$ have the maximum.
 \begin{table}[h]
 	\begin{center}
 		\begin{tabular}{|c|c|c|c|}\hline
 			$\gamma$ & $N_\gamma+1$ & $(a_3,a_4,a_5,a_6)$ & $|\,S_7\,|$ \\ \hline
 			7 & 77 & \(\begin{tabular}{c}
 			\\
 			$\displaystyle{\left (\frac{1}{2},1,-\frac{1}{2},\frac{3}{2}\right )}$ \\
 			\\
 			$\displaystyle{\left (\frac{1}{4},-\frac{1}{8},\frac{1}{4},-\frac{1}{4}\right )}$\\
 			\\
 			$\displaystyle{\left (\frac{3}{4},\frac{3}{8},\frac{1}{4},-\frac{5}{16}\right )}$\\
 			\\
 			$\displaystyle{\left (\frac{1}{2},\frac{1}{4}+\frac{1}{4}\sqrt{-1},\frac{1}{8}+\frac{1}{4}\sqrt{-1},\frac{1}{4}\right )}$ \\
 			\\
 			$\displaystyle{\left (\frac{1}{2},\frac{1}{4}+\frac{1}{8}\sqrt{2},\frac{1}{8}+\frac{1}{8}\sqrt{2},\frac{1}{4}\right )}$ \\
 			\\
 			$\displaystyle{\left (\frac{1}{2},\frac{3}{8}+\frac{1}{8}\sqrt{-3},\frac{1}{4}+\frac{1}{8}\sqrt{-3},\frac{1}{4}\right )}$ \\
 			\\
 			$\displaystyle{\left (\frac{1}{2},\frac{1}{8}+\frac{1}{8}\sqrt{5},\frac{1}{8}\sqrt{5},\frac{1}{4}\right )}$ \\
 			\\
 			$\displaystyle{\left (\frac{1}{2},-\frac{1}{4}+\frac{1}{4}\sqrt{3},-\frac{3}{8}+\frac{1}{4}\sqrt{3},\frac{1}{4}\right )}$ \\
 			$\,$
 			\end{tabular}\)  &\(\begin{tabular}{c}
 			\\
 			56\\
 			\\
 			\\
 			67\\
 			\\
 			\\
 			68\\
 			\\
 			\\
 			65\\
 			\\
 			\\
 			57\\
 			\\
 			\\
 			65\\
 			\\
 			\\
 			55 \\
 			\\
 			\\
 			71\\
 			$\,$
 			\end{tabular}\) \\ \hline
 		\end{tabular}	
 	\end{center}
 	\caption{maximum size of $S_7$ for some $(a_3,a_4,a_5,a_6)$.}\label{tabA7seq}
 \end{table}

 If we use the vectors already considered in the previous sections, we obtain very large values for the number of elements of $S_7$ (see Table \ref{tabA7seq}), in comparison to the ones obtained in Table \ref{tabA7}. So, considering $\mu<\nu\leq\gamma$, having $|\,S_\mu\,|$ small for some $(a_3, \dots, a_{\mu-1})$ doesn't imply that $|\,S_\nu\,|$ is also small for  $(a_3, \dots, a_{\mu-1},a_\mu, \dots, a_\nu)$. Nevertheless, the sequences in Table \ref{tabA7seq} can be used to construct LT-superregular matrices of order $7$, for finite prime fields, when $p\geq 59$, since $59,61$ and $71$ are congruent with plus or minus one module $5$ and $67\equiv 1\mod{3}$. The Table \ref{tabA7small} shows examples of vectors  $(a_3,a_4,a_5,a_6,a_7)$ such that the size of $S_7$ is minimum and from which we can create $7\times 7$ LT-superregular matrices when $p< 59$.
 \begin{table}[h]
 	\begin{center}
 		\begin{tabular}{|c|c|}\hline
Field size & Example of $(a_3,a_4, a_5, a_6 ,a_7)$ \\ \hline
17 &  $(9,3,5,1,3)$\\
19 &  $(10,13,1,18,7)$\\
23 &  $(4,19,6,4,8)$ \\
29 &  $(15, 19, 8, 22,1)$\\
31 &  $(4, 30, 22, 17, 2)$\\
37 & $(8, 35, 6, 25,12)$ \\
41 & $(7, 22, 26, 7,1)$ \\
43 & $(17, 12, 25, 23,2)$ \\
47 & $(24, 9, 3, 18,4)$ \\
53 & $(27, 42, 22, 20,3)$\\ \hline
 \end{tabular}	
 	\end{center}
 	\caption{Examples of LT-superregular matrices of order $7$ for small finite prime fields}\label{tabA7small}
 \end{table}

 \begin{table}[h]
 	\begin{center}
 		\begin{tabular}{|c|c|c|c|}\hline
 			$\gamma$ &		Field size & Example of $(a_3,a_4, \dots,a_\gamma)$ & Different minors\\ \hline
 			8 & 31 & $(7, 22, 20, 2, 13,5)$ & 30\\
 			& 37 & $(2, 8, 28, 32, 18,16)$ & 36\\
 			9 & 59 & $(5,28,58,56,26,18,19)$ & 58\\
 			&	61 & $(7,60,55,39,10,12,16)$ & 60\\\hline
 		\end{tabular}	
 	\end{center}
 	\caption{Examples of LT-superregular matrices of order $8$ and $9$ for small finite prime fields}\label{tabA}
 \end{table}

Again, using Maple we were able to compute LT-superregular matrices of order $\gamma$, for $\gamma=8$ and $\gamma=9$ over the two smallest finite prime fields. These examples are shown in Table \ref{tabA}. In \cite{HaOs2018}, the authors presented a greedy algorithm able to compute superregular matrices $9 \times 9$ and $10 \times 10$ over the field $\F_{2^8}$. The results presented in the tables lead to the following two conjectures.

\begin{conjecture}
For a given $\gamma\geq 2$ and for any odd prime $p$, there exists a vector $(a_1, a_2, \dots ,a_{\gamma-1})\in \F_p^{\gamma-1}$ such that $S_\gamma$ has at most $\frac{N_\gamma}{2}+2$ elements. 
\end{conjecture}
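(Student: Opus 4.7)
The plan is to strengthen the anti-diagonal pairing used to derive $N_\gamma$ by a second pairing activated by a specific choice of the top entries of $A_\gamma$, so that roughly half of the formally distinct minor-polynomials collapse to the same forbidden value for $a_\gamma$. Throughout I would fix $a_1=a_2=1$, as permitted by the scaling lemma that opens this section, and, following the pattern set by the proven cases $\gamma=4,5,6$, take $a_3=1/2$ as the pivotal choice that activates the extra symmetry.

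Concretely, I would first recast $|S_\gamma|$ as a count of distinct roots: each submatrix contributing to $L_\gamma$ produces a polynomial $c_0+c_1a_\gamma$ with $c_i\in\F_p[a_1,\ldots,a_{\gamma-1}]$, and its root $-c_0/c_1$ (when $c_1\neq 0$) is the forbidden value. Two submatrices give the same forbidden value either when their polynomials are proportional, which is the anti-diagonal identification already underlying $N_\gamma$, or when the ratios $-c_0/c_1$ agree without the polynomials themselves being proportional. The heart of the proof would be the construction of an explicit combinatorial involution $\iota$ on the set of submatrices in $L_\gamma$ modulo anti-diagonal reflection, with the property that whenever $a_3=1/2$ the polynomials attached to $P$ and $\iota(P)$ share a common root identically in $a_4,\ldots,a_{\gamma-1}$. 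The additive $+2$ in the bound would absorb the fixed points of $\iota$ together with the trivial root $0$ coming from $|[a_\gamma]|$.

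The main obstacle is identifying and verifying such an involution in a form valid for every $\gamma$. In the case $\gamma=6$, the coincidences among the $26$ expressions in (\ref{a6not}) are extracted one by one via computer algebra, not via a uniform combinatorial rule, and inspection shows that some merges are of a different algebraic nature (imposing quadratic equations on $a_4,a_5$) than others. A plausible route is to encode each submatrix of $X_\gamma$ as a lattice path via the Lindstr\"om-Gessel-Viennot formula for Toeplitz minors, search for a path-level involution corresponding to the identity produced by $a_3=1/2$, and then tune the remaining $a_4,\ldots,a_{\gamma-1}$ to match the further coincidences observed in the data of Tables \ref{tabA6}-\ref{tabA}. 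A fallback strategy is a dimension/incidence argument: the $\gamma-4$ free parameters are used to impose on the order of $N_\gamma$ equations of the form $-c_0/c_1=-c_0'/c_1'$ among the surviving polynomials, and one would then need a genericity statement — plausibly via a Chevalley-Warning-type count over $\F_p$ — showing that sufficiently many such equations can be satisfied simultaneously for every odd prime $p$, with the small-$p$ cases handled by the explicit constructions already tabulated.
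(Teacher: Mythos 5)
First, a point of calibration: the statement you are addressing is stated in the paper as a \emph{conjecture}, supported only by the computational evidence in Tables \ref{tabA6}--\ref{tabA10}; the paper contains no proof, so your text cannot be measured against one. Judged on its own terms, what you have written is a research programme, not a proof, and its central ingredients are all left unestablished: the involution $\iota$ on $L_\gamma$ modulo anti-diagonal reflection is only hypothesized, the claim that setting $a_3=\tfrac{1}{2}$ makes the polynomials attached to $P$ and $\iota(P)$ share a root \emph{identically} in $a_4,\dots,a_{\gamma-1}$ is asserted rather than derived, the ``$+2$'' accounting of fixed points is not argued, and the fallback ``Chevalley--Warning-type genericity count'' is named but not formulated. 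You candidly acknowledge the main obstacle yourself, which is honest, but it means no step of the argument is actually carried out.

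More seriously, the paper's own computations contradict the mechanism you propose. Fixing $a_3=\tfrac{1}{2}$ alone does not collapse roughly half of the forbidden values: with $a_3=\tfrac{1}{2}$ and $a_4=1$ the paper finds that all ten expressions in $S_5$ remain distinct for every $p\geq 11$, and for $\gamma=6$ the reductions from $26$ to $13$--$14$ elements are obtained only by imposing additional quadratic relations on $(a_4,a_5)$ whose solvability depends on quadratic residues, which is precisely why Theorem \ref{theoA6} must split into five congruence classes of $p$ and invoke $\sqrt{-1},\sqrt{2},\sqrt{-3},\sqrt{5},\sqrt{3}$. Thus the coincidences are not identities in the remaining variables triggered by $a_3=\tfrac{1}{2}$, but $p$-dependent algebraic conditions, and any uniform involution valid for all odd $p$ would have to account for this dependence. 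Table \ref{tabA7seq} reinforces the difficulty: propagating the good $\gamma=6$ choices to $\gamma=7$ yields $|S_7|$ between $55$ and $71$, far above the conjectured bound $N_7/2+2=40$, while the minimizing vectors in Table \ref{tabA7small} show no visible pattern across primes. So the gap is not a technicality: the key combinatorial identity your plan rests on is unproven and, in the form stated, appears to be false, and the fallback counting argument would need to control simultaneous solvability of many rational-function equations over $\F_p$, which no tool you cite provides. The statement therefore remains open after your proposal.
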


\begin{conjecture}
We also conjecture that for $\gamma\geq 2$, there exists a lower triangular Toeplitz superregular matrix of order $\gamma \times \gamma$ over $\F$ with $|\F|\geq  2^{\frac{2\gamma}{3}}$.
\end{conjecture}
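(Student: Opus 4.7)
The plan is to make the greedy recursive construction of Remark~\ref{construction} and Theorem~\ref{theoSm} quantitative, by exhibiting, for each $\gamma$, a structured choice of entries $(a_1,\ldots,a_{\gamma-1})$ for which the exclusion set $S_\gamma$ has cardinality at most a constant times $2^{2\gamma/3}$. Since the greedy construction succeeds over every field $\F$ with $|\F| > \max_{i\leq\gamma}|S_i|$, such a bound would immediately yield the conjecture.

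The mechanism for collapsing minors would extend the phenomenon exploited in Theorem~\ref{theoA6}: specialising the entries to elements of a quadratic extension, such as $a_i \in \frac{1}{2^{i-1}}\Z[\sqrt{u}]$ for $u\in\{-3,-1,2,3,5,\ldots\}$, forces algebraic identities among the minor polynomials and causes many of them to coincide. We would organise the minors of $A_\gamma$ into equivalence classes under the antidiagonal symmetry of Hutchinson et al.\ combined with the additional identities produced by the specialisation, and then count representatives class by class; the hope is that the combined symmetries reduce each $|S_i|$ to $O(2^{2i/3})$. As a complementary route, one could attempt to lift the small LT-superregular matrices (for $\gamma\leq 9$, listed in Tables~\ref{tabA7small} and~\ref{tabA}) to larger $\gamma$ via a tensor or subfield construction analogous to the $\F_{2^8}$ construction for $10\times 10$ matrices in \cite{HaOs2018}.

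The principal obstacle is the enormous quantitative gap to be bridged: the Hutchinson bound $N_\gamma$ grows like $4^\gamma/\gamma^{3/2}$, while the target is $2^{2\gamma/3}$, so a reduction factor of order $2^{4\gamma/3}/\gamma^{3/2}$ is required. The concrete reductions achieved in Sections~2.3--2.5 (from $N_6=26$ down to as few as $10$, and from $N_7=76$ down to $16$) are encouraging for small $\gamma$, but no visible pattern yet scales to such an exponential saving. We expect any full proof to require either a genuinely new algebraic construction exploiting the subfield structure of $\F_{2^{\lceil 2\gamma/3\rceil}}$, or a refined probabilistic argument (for instance via a Lov\'asz-type local lemma) in which dependencies among the minor polynomials are quantified finely enough to replace the naive union bound $|\F|>N_\gamma$ by one of the conjectured order.
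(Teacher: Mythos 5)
The statement you are addressing is a \emph{conjecture} in the paper: the authors offer no proof, only computational evidence (the examples and counts in Tables~\ref{tabA7small}, \ref{tabA} and \ref{tabA10}), and they explicitly leave its resolution, as well as the move from prime fields to extension fields of characteristic $2$, as future work. Your text is likewise a research programme rather than a proof, and the gap it leaves open is exactly the conjecture itself. The decisive missing step is the bound you would need to feed into the greedy argument of Remark~\ref{construction}/Theorem~\ref{theoSm}: you must exhibit, for every $\gamma$, a single chain $(a_1,\dots,a_{\gamma-1})$ along which \emph{every} exclusion set $S_i$, $i\leq\gamma$, has size below roughly $2^{2i/3}$. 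Nothing in the paper or in your sketch produces such a bound; the quadratic-extension specialisations of Theorem~\ref{theoA6} shrink $N_6=26$ only to $10$--$14$, a constant-factor saving, whereas the conjecture demands cutting $N_\gamma\sim 4^{\gamma}/\gamma^{3/2}$ down to $2^{2\gamma/3}$, an exponentially large reduction, and you acknowledge yourself that no visible pattern scales this way.

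Two further points undercut the specific routes you propose. First, the requirement that all $S_i$ be small \emph{simultaneously} along one prefix is not a technicality: the paper's own data (Table~\ref{tabA7seq} and the surrounding discussion) show that choices making $|\,S_\mu\,|$ small for $\mu<\nu$ can force $|\,S_\nu\,|$ to be large, and conversely the best $S_7$ in the $p=23$ example occurs when $S_4$ and $S_5$ are maximal; so a class-by-class symmetry count for a fixed structured specialisation cannot simply be iterated. Second, the conjecture as stated concerns fields of size at least $2^{2\gamma/3}$, which for the interesting range are proper extension fields (e.g.\ of characteristic $2$), while all the collapsing identities you invoke are computed in prime fields $\F_p$; no lifting, tensor, or subfield construction is actually defined or verified, and the $\F_{2^8}$ examples of size $10$ you cite are isolated computer finds, not a construction. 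As it stands, your proposal correctly identifies the obstacles but proves nothing beyond what the paper already conjectures.
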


\section{Computer calculations}
In this section, we give a brief description of the computer algorithms we used to obtain the superregular matrices described throughout the paper. All the calculations were performed in Maple.

For $4\leq\gamma\leq 7$, our main goal was to find the minimum number of different minors $A_\gamma\in\F$ has, depending on the finite prime field $\F$. So for each $\gamma$, we started with the smallest possible prime number $p$ and tried all the possible combinations of $(a_3,a_4,\dots,a_{\gamma-1})\in\F_p^{\gamma-3}$ satisfying $a_i\notin S_i$, for $3\leq i\leq\gamma-1$, following the idea explained in Remark \ref{construction}. We were unable to fully achieve the main goal for $\gamma=7$, since when $p\geq 101$ the amount of computations are already too large. For $\gamma=8$ and $\gamma=9$, we just tried to find the two smallest primes $p$ for which exists an LT-superregular Toeplitz matrix over $\F_p$ and gave those examples. For $\gamma=10$, we were unable to find any superregular matrix, using this method of trying all possible values of $(a_3,a_4,\dots,a_{\gamma-1})\in\F_p^{\gamma-3}$, with $p$ small. Therefore for $83\leq p\leq 257$ we randomly select vectors $(a_3,a_4,a_5,a_6)\in\F_p^{4}$, satisfying $a_i\notin S_i$, for $3\leq i\leq 6$  and tried all possible vectors $(a_7,a_8,a_9)\in\F_p^{3}$, satisfying $a_i\notin S_i$, for $7\leq i\leq 9$. In Table \ref{tabA10} we show some examples and the relative frequency of our counts for each of the primes considered.

  \begin{table}[h]
  	\begin{center}
  		\begin{tabular}{|c|c|c|c|}\hline
  			Field size & Example of $(a_3,a_4, \dots,a_\gamma)$ & Different minors & Relative frequency\\ \hline
  			 173 & $(156, 131, 142, 64, 96, 4, 107, 34)$ & 172 & 0.03$\%$\\
  			 193 & $(128, 144, 81, 124, 95, 164, 175, 171)$ & 192 & 0.03$\%$\\
  			 199 & $(179, 172, 149, 3, 168, 93, 129, 187)$ & 198 & 0.03$\%$\\
  			 227 & $(6, 150, 62, 124, 14, 62, 161, 108)$ & 226 & 0.3$\%$\\
  			 229 & $(195, 120, 223, 88, 46, 15, 111, 210)$ & 228 & 1$\%$\\
  			  239 & $(179, 39, 21, 23, 179, 7, 162, 68)$ & 238 & 3$\%$\\
  			   251 & $(131, 135, 195, 56, 39, 64, 185, 43)$ & 250 & 4$\%$\\
  			    257 & $(182, 147, 249, 62, 174, 18, 50, 149)$ & 256 & 5.3$\%$\\\hline
  			
  		\end{tabular}	
  	\end{center}
  	\caption{Examples of LT-superregular matrices of order $10$ for a few small finite prime fields}\label{tabA10}
  \end{table}

\section*{Conclusions and future work}

In this paper we have continued the study of LT-superregular Toeplitz matrices $A_\gamma$. We present new results regarding the minimum number of different minors appearing in $A_\gamma$ and the field sizes that allow the construction of these matrices. Based on the work presented we made the two conjectures. An interesting avenue for further research is to investigate these results using finite field extensions of finite fields of smaller characteristic, e.g., of characteristic $2$, which is of particular interest in coding theory. Another interesting open problem left for future research is to know whether, or in what conditions, there exists LT-superregular matrices over finite fields larger than the minimum $\F_p$ found and smaller than $N_\gamma$. For instance, in \cite{Hutchinson2008} it was found over $\F_{127}$ and in \cite{HaOs2018} over $\F_{256}$,  but to the best of our knowledge the existence of LT-superregular matrices was not known over $\F_p$ when $p\notin \{ 127, 256 \}$ and $p < N_{10} = 2494$. Also nothing is known for the case $\gamma\geq 11$.

\section*{Acknowledgement}

The first author is supported by the Portuguese Foundation for Science and Technology (FCT-Funda\c{c}\~{a}o para a Ci\^{e}ncia e a Tecnologia), through CIDMA - Center for Research and Development in Mathematics and Applications, within project UID/MAT/04106/2019. The second listed author is partially supported by the the Universitat d'Alacant (Grant No. VIGROB-287) and Generalitat Valenciana (Grant No. AICO/2017/128).

\section*{References}

\bibliographystyle{plain}
\bibliography{biblio_com_tudo,code1,Ref-Articles,Ref-Climent-2,Ref-PhDT-MT}

\end{document}